\newcounter{Theorem}
\newtheorem{theorem}[Theorem]{Theorem}
\newtheorem{proposition}[Theorem]{Proposition}
\newtheorem{lemma}[Theorem]{Lemma}
\newtheorem{remark}[Theorem]{Remark}
\newcommand{\la}{\langle}
\newcommand{\ra}{\rangle}
\date{}
\begin{document}
	
	\begin{frontmatter}
		
		
		
		\title{A Modified Dai-Liao Spectral Conjugate Gradient Method with an  Application to Signal Processing}
		

		\author{D. R. Sahu} 
		\ead{drsahudr@gmail.com}
		
		\affiliation{organization={Department of Mathematics, Banaras Hindu University},
			city={Varanasi},
			country={India}}
		
		\author{Shikher Sharma} 
		\corref{Shikher Sharma}
		\ead{shikhers043@gmail.com}
		
		\affiliation{organization={Department of Mathematics, The Technion -- Israel Institute of
				Technology},
			city={Haifa},
			country={Israel}}
		
		\author{Pankaj Gautam} 
		\ead{pgautam908@gmail.com}
		
		\affiliation{organization={Department of Applied Mathematics and Scientific Computing, Indian Institute of Technology Roorkee},
			country={India}} 
			
				\author{Simeon Reich} 
			\ead{sreich@technion.ac.il}
			
			\affiliation{organization={Department of Mathematics, The Technion- Israel Institute of
					Technology},
				city={Haifa},
				country={Israel}}
				
		\begin{abstract}
			We propose and study a variant of the Dai-Liao spectral conjugate gradient method, developed through an analysis of eigenvalues and inspired by a modified secant condition. We show that our proposed method is globally convergent for general nonlinear functions under standard assumptions. By incorporating the new secant condition and a quasi-Newton direction, we introduce updated spectral parameters. These changes ensure that the resulting search direction satisfies the sufficient descent property without relying on any line search.  Numerical experiments show that the proposed algorithm performs better than several existing methods in terms of convergence speed and computational efficiency. Its effectiveness is further demonstrated through an application to signal processing.
		\end{abstract}
		
		%
		
		\begin{keyword}
		Dai-Liao Method \sep  Optimization \sep  Signal Processing \sep Spectral Conjugate Gradient Method 
			
			
			\MSC[2020] {65K05 \sep 90C30 \sep 49M37}
			
		\end{keyword}
		
	\end{frontmatter}
	
		
		

		\section{Introduction}

		Let $\mathbb{R}^m$ denote the Euclidean space equipped with the usual  inner product $\langle \cdot , \cdot \rangle$ and its associated norm $\|\cdot\|$.  Let $f: \mathbb{R}^m \to \mathbb{R}$. Our goal is to solve the following minimization problem:
		
		\begin{equation}\label{CGeq}
			\text{find } x^* \in \mathbb{R}^m \text{ such that } f(x^*) = \min_{x \in \mathbb{R}^m} f(x).
		\end{equation}
		We denote the set of minimizers of $f$ by $\Omega_{\text{min}}(f,\mathbb{R}^m)$.
		
		A common approach to find a solution to \eqref{CGeq} is the following iterative method:  Let $x_0\in \mathbb{R}^m$ be an initial point. Given the current iterate $x_n$, the next iterate $x_{n+1}$ is computed by
		\begin{equation}\label{SearchDM}
			x_{n+1}=x_n+\alpha_nd_n  \text{ for all } n \in \mathbb{N}_0=\mathbb{N}\cup \{0\},
		\end{equation}
		where $\alpha_n>0$ is the stepsize determined by exact or inexact line search and $d_n$ is a suitable search direction. Methods that follow the update rule \eqref{SearchDM} are referred to as line search methods. These methods require only the search direction $d_n\in \mathbb{R}^m$ and the stepsize $\alpha_n\in (0,\infty)$.
		An appropriate descent direction $d_n$ must be chosen so that it satisfies the descent condition:
		\begin{equation}\label{DescentCond}
			\langle \nabla f(x_n), d_n\rangle < 0  \text{ for all } n \in \mathbb{N}_0.
		\end{equation}
		
		A common choice for a descent direction is $d_n = -\nabla f(x_n)$, which simplifies the general line search method \eqref{SearchDM} to the familiar gradient descent (GD) method: 
		$$	x_{n+1} = x_n - \alpha_n \nabla f(x_n) \text{ for all } n \in \mathbb{N}_0.$$
		If there exists a constant $c > 0$ such that
		\begin{equation*}
			\langle \nabla f(x_n), d_n\rangle \leq -c\|\nabla f(x_n)\|^2 \text{ for all } n \in \mathbb{N}_0,
		\end{equation*}
		then the vector $d_n$ is said to satisfy the sufficient descent condition.
		
		Conjugate gradient (CG) methods are among the most effective algorithms for solving large-scale unconstrained optimization problems due to their low storage requirements. These methods require only first-order derivatives, makes them highly suitable for large-scale applications. 
		The conjugate gradient (CG) method was originally proposed by Hestenes and Stiefel \cite{Hestenes1952} for the solution of linear systems. Later, Fletcher and Reeves \cite{Fletcher} extended this concept and formulated the first nonlinear CG method for unconstrained optimization. Due to their strong theoretical foundation and excellent numerical performance, many other versions of the CG method have been proposed (see \cite{Polak1969, Dai2001, Fletcher1987, Dai1999} and the references therein).


In general, CG methods use the iterative scheme 
\eqref{SearchDM}, where the search direction $d_n$ is defined by
		\begin{equation}\label{Search_direction}
			d_n=\begin{cases}
				-\nabla f(x_0), & \text{ if }  n=0,\\
				-\nabla f(x_{n}) + \beta_{n-1} d_{n-1}, & \text{ if } n \geq 1,
			\end{cases}
		\end{equation}
		where $\beta_n$ is the CG parameter. There are many formulas for $\beta_n$, each resulting in different computational behavior \cite{HagerZhang2006}.
		
		One of the most well-known CG methods was proposed by Dai and Liao \cite{Dai2001}. Their approach is based on an extended conjugacy condition, where the CG parameter is defined by
		\begin{equation}\label{DLmethod}
			\beta_n^{DL} = \frac{\langle \nabla f(x_{n+1}), y_n \rangle}{\langle d_n, y_n \rangle} - t \frac{\langle \nabla f(x_{n+1}), s_n \rangle}{\langle d_n, y_n \rangle} \text{ for all } n\in \mathbb{N}_0,
		\end{equation}
		where $t\in (0,\infty)$, $s_n = x_{n+1} - x_n$, and $y_n = \nabla f(x_{n+1}) - \nabla f(x_n)$. Note that when $t = 0$, $\beta_n^{DL}$ simplifies to the CG parameter proposed by Hestenes and Stiefel \cite{Hestenes1952}:
		\[	\beta_n^{HS} = \frac{\langle \nabla f(x_{n+1}), y_n \rangle}{\langle d_n, y_n \rangle} \text{ for all } n\in \mathbb{N}_0.\]
		The CG parameter proposed by Hager and Zhang \cite{HagerZhang2005} is defined by
		\[	\beta_n^{HZ} = \frac{\langle \nabla f(x_{n+1}), y_n \rangle}{\langle d_n, y_n \rangle} - 2 \frac{\| y_n \|^2}{\langle d_n, y_n \rangle} \frac{\langle \nabla f(x_{n+1}), d_n \rangle}{\langle d_n, y_n \rangle} \text{ for all } n\in \mathbb{N}_0.\]
	  The parameter $\beta_n^{HZ}$ can be viewed as an adaptive version of \eqref{DLmethod} for the specific choice $t = 2 \frac{\| y_n \|^2}{\langle d_n, y_n \rangle}$.
		Similarly, the CG parameter proposed by Dai and Kou \cite{DaiKou2013} is defined by
		\[	\beta_n^{DK}(\tau_n) = \frac{\langle \nabla f(x_{n+1}), y_n \rangle}{\langle d_n, y_n \rangle} - \left( \tau_n + \frac{\| y_n \|^2}{\langle s_n, y_n \rangle} - \frac{\langle s_n, y_n \rangle}{\| s_n \|^2} \right) \frac{\langle \nabla f(x_{n+1}), s_n \rangle}{\langle d_n, y_n \rangle},\]
		where $\tau_n$
		is a parameter that plays the role of a scaling factor in the scaled memoryless Broyden–Fletcher–Goldfarb–Shanno (BFGS) method, which can be viewed as another adaptive variant of

		Motivated by Shanno’s matrix interpretation of CG methods \cite{Shanno1978}, Babaie-Kafaki and Ghanbari \cite{KafakiGh2014} showed that the Dai–Liao search directions can be written in the form $d_{n+1} = -P_{n+1} \nabla f(x_{n+1})$ for all $n \in \mathbb{N}_0$, where $P_{n+1} = I - \frac{s_n y_n^T}{\langle s_n, y_n \rangle} + t \frac{s_n s_n^T}{\langle s_n, y_n \rangle}$
		is a nonsingular matrix for $t \in (0,\infty)$ and $\langle s_n, y_n \rangle \neq 0$; see also \cite{KafakiEJOR2014}. By performing an eigenvalue analysis on the symmetrized version of $P_{n+1}$, defined by $A_{n+1} = \frac{P_{n+1} + P_{n+1}^T}{2}$,
		we can derive a two-parameter formula for $t$ in the Dai-Liao CG parameter \eqref{DLmethod}, given by
		\begin{equation}\label{CGpara}
			t_n^{p,q} = p \frac{\| y_n \|^2}{\langle s_n, y_n \rangle} - q \frac{\langle s_n, y_n \rangle}{\| s_n \|^2} \text{ for all } n\in \mathbb{N}_0,
		\end{equation}
		which ensures the descent condition \eqref{DescentCond} for $p \in  \left(\frac{1}{4},\infty\right)$ and $q \in \left(-\infty, \frac{1}{4}\right)$. However, in cases where $\frac{\| y_n \|^2}{\langle s_n, y_n \rangle}$ and $\frac{\langle s_n, y_n \rangle}{\| s_n \|^2}$ are nearly equal, their subtraction may not be numerically stable \cite{Kafaki2016}. Moreover, the parameter $t$ in the CG parameter $\beta_n^{DL}$ must be nonnegative. To address these issues, they restricted $q$ in \eqref{CGpara} to be nonpositive and proposed the following CG parameter:
		
		\[	\beta_n^{DDL} = \frac{\langle \nabla f(x_{n+1}), y_n \rangle}{\langle d_n, y_n \rangle} - t_n^{p,q} \frac{\langle \nabla f(x_{n+1}), s_n \rangle}{\langle d_n, y_n \rangle} \text{ for all } n\in \mathbb{N}_0, \]
		where $t_n^{p,q}$ is defined by \eqref{CGpara} with $p \in  \left( \frac{1}{4},\infty\right)$ and $q \in (-\infty,0]$. Note that for the parameter choice $(p, q) = (2, 0)$ in \eqref{CGpara}, $\beta_n^{DDL}$ reduces to $\beta_n^{HZ}$. Similarly, if $(p, q) = (1, 0)$, then $\beta_n^{DDL}$ is equivalent to $\beta_n^{DK}(\tau_n)$
		with the optimal choice of $\tau_n = \frac{\langle s_n, y_n \rangle}{\| s_n \|^2}$. For additional variants of the Dial–Liao conjugate gradient method, we refer to \cite{Kafaki2025,Kafaki2025Opt}.

		Another significant category of iterative methods used for solving unconstrained optimization problems is the spectral gradient method. Initially introduced by Barzilai and Borwein \cite{Barzilai1988}, these methods were later advanced by Raydan \cite{Raydon1997}, who developed a spectral gradient approach specifically for large-scale unconstrained optimization challenges. The spectral conjugate direction is defined by 
		\[	d_{n}=	\begin{cases}
			-\nabla f(x_0), & \text{ if } n=0, \\
			-\theta_{n} \nabla f(x_{n}) + \beta_{n-1}d_{n-1}, & \text{ if } n\geq 1,
		\end{cases}\]
		where $\{\theta_{n}\}\subset (0,\infty)$ is the spectral parameter and $\{\beta_n\}\subset (0,\infty)$ is the conjugate parameter.
		
		Building on the encouraging numerical results associated with spectral gradient methods \cite{Barzilai1988, Raydon1997}, Birgin and Mart\'inez \cite{Birgin2001} proposed the integration of spectral gradient techniques with conjugate gradient methods, culminating in the development of the first spectral conjugate gradient (CG) method. The spectral CG methods represent a powerful enhancement of traditional CG methods by incorporating the principles of spectral gradients within the conjugate gradient framework. Continuing in this direction, Yu and co-authors \cite{Yu2008OMS} adapted Perry’s spectral CG formulation \cite{Perry1978}, obtaining a more effective variant of the method. 
Inspired by the good convergence of quasi-Newton techniques, Andrei \cite{Andrei2007} proposed a CG method using a scaled BFGS preconditioner, and this idea was further developed in later works \cite{Andrei2008, Andrei2010EJOR, Babaie2013MMA}.
		
		Li and Fukushima \cite{Li2001} proposed a modified form of the secant condition, and Zhou and Zhang \cite{Zhou2006} later built on their idea to obtain the following modified secant relation:
		\begin{equation}\label{MSCeq1}
			\nabla^2 f(x_{n+1}) s_n = z_n,
		\end{equation}
		where
		\begin{equation}\label{MSCeq2}
			z_n = y_n + h_n \|\nabla f(x_n)\|^r s_n, \quad h_n = \nu + \max \left\{-\frac{\langle s_n, y_n \rangle}{\|s_n\|^2}, 0\right\} \|\nabla f(x_n)\|^{-r}
		\end{equation}
		with $r,\nu\in (0,\infty)$. The modified secant condition \eqref{MSCeq2} is essential to prove the global convergence of the MBFGS algorithm when dealing with non-convex objectives \cite{Zhou2006}. Note that the following expression holds irrespective of the line search used.
		\begin{align*}
			\langle z_n,s_n \rangle &= \langle y_n, s_n \rangle + \nu \|\nabla f(x_n)\|^r \|s_n\|^2 + \max \left\{-\frac{\langle s_n, y_n \rangle}{\|s_n\|^2}, 0\right\} \|s_n\|^2 \\
			& \geq \nu \|\nabla f(x_n)\|^r \|s_n\|^2 > 0 \text{ for all } n \in \mathbb{N}_0.
		\end{align*}
		
		
		Since \eqref{MSCeq2} leads to favorable theoretical properties, Faramarzi and Amini \cite{Faramarzi2019} modified the spectral CG method of Jian et al. \cite{Jian2017OMS} and proposed the following choice for the conjugate parameter:
		\begin{equation*}
			\beta_n^{ZDK} = \frac{\langle \nabla f(x_{n+1}), z_n \rangle}{\langle d_n, z_n \rangle} - \frac{\|z_n\|^2}{\la d_n,z_n\ra}\frac{\langle \nabla f(x_{n+1}), d_n \rangle}{\langle d_n, z_n \rangle} \text{ for all } n\in \mathbb{N}_0.
		\end{equation*}
		Based on this conjugate parameter, they constructed the following spectral parameter:
		\begin{equation*}
			\theta_{n+1}=\begin{cases}
				1- \frac{\|z_n\|^2}{\la d_n,z_n\ra}\frac{\la \nabla f(x_{n+1}),d_n\ra}{\la  \nabla f(x_{n+1}),z_n\ra}, & \text{ if } 	1- \frac{\|z_n\|^2}{\la d_n,z_n\ra}\frac{\la \nabla f(x_{n+1}),d_n\ra}{\la  \nabla f(x_{n+1}),z_n\ra} \in \left[\frac{1}{4p}+|q|+\eta, \tau\right],\\
				1, & \text{ otherwise},
			\end{cases}
		\end{equation*}
		where $\eta$ and $\tau$ are positive constants. To ensure global convergence of this method, Faramarzi and Amini \cite{Faramarzi2019} used the line search that satisfied the strong Wolfe conditions
		\begin{equation}\label{AFareq28}
			f(x_n+\alpha_n d_n) \leq f(x_n)+\delta \alpha_n \la \nabla f(x_n),d_n\ra,
		\end{equation}  
		\begin{equation}\label{AFareq29}
			|\la \nabla f(x_n+\alpha_n d_n), d_n\ra|\leq -\sigma \la \nabla f(x_n), d_n\ra,
		\end{equation}
		where $0<\delta <\sigma<1$, and the following assumptions:
		\begin{enumerate}\rm 
			\item[(A1)] The level set \(C_0=\{x\in \mathbb{R}^n: f(x)\leq f(x_0)\) is bounded.
			\item[(A2)] $f$ is continuously differentiable in some neighbourhood $\Omega$ of $C_0$ with $L$-Lipschitz continuous gradient.
		\end{enumerate}

		The primary motivation of the present study is to propose a modified spectral conjugate gradient method, guided by an eigenvalue-based analysis and a modified secant condition, and to prove global convergence for general objective functions under typical conditions.This new method always meets the sufficient-descent requirement, independent of the line search strategy applied.  To demonstrate the effectiveness of our proposed approach, we present numerical results that validate its performance and show its superior convergence compared to the modified spectral conjugate gradient method of Faramarzi and Amini \cite{Faramarzi2019}, and the scaled conjugate gradient method of Mard and Fakhari \cite{Mrad2024}. Furthermore, we explore its application to compressed sensing, emphasizing its practical relevance and significant advantages.
		

		
		\section{Preliminaries}
		
		In this section we present some fundamental results regarding Euclidean space that will be used to establish the convergence of our proposed method.
		
		\begin{lemma}\rm (\cite{Faramarzi2019}) \label{Lem1} Let $a,b,\sigma\in \mathbb{R}$. Then
			$$(c+\sigma d)^2\leq (1+\sigma^2)(c^2+d^2) \text{ for all } c,d,\sigma \geq 0.$$
		\end{lemma}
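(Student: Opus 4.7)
The plan is to recognize this as a direct instance of the Cauchy--Schwarz inequality in $\mathbb{R}^2$. First, I would rewrite the left-hand side as an inner product: $c+\sigma d=\langle (1,\sigma),(c,d)\rangle$. Then applying Cauchy--Schwarz to the vectors $(1,\sigma)$ and $(c,d)$ yields
\[
(c+\sigma d)^2=\langle (1,\sigma),(c,d)\rangle^2\le \|(1,\sigma)\|^2\,\|(c,d)\|^2=(1+\sigma^2)(c^2+d^2),
\]
which is exactly the desired bound.

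As an alternative, purely elementary route (useful in case one prefers to avoid citing Cauchy--Schwarz), I would simply expand both sides. Since
\[
(1+\sigma^2)(c^2+d^2)-(c+\sigma d)^2=\sigma^2 c^2-2\sigma cd+d^2=(\sigma c-d)^2\ge 0,
\]
the inequality follows. This identity makes it transparent that the nonnegativity hypotheses on $c,d,\sigma$ are actually not needed; the conclusion holds for all real $c,d,\sigma$, so the stated hypothesis is a harmless simplification.

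There is no real obstacle here: the statement is a one-line consequence of the square-expansion identity above (or, equivalently, of Cauchy--Schwarz). The only thing to be careful about is to state the result using the variable names actually appearing on the right-hand side, namely $c,d,\sigma$, since the phrasing ``Let $a,b,\sigma\in\mathbb{R}$'' in the hypothesis appears to be a minor typographical slip.
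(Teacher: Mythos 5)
Your proof is correct. The paper itself gives no proof of this lemma (it is quoted directly from the cited reference), so there is nothing to compare against; your square-expansion identity $(1+\sigma^2)(c^2+d^2)-(c+\sigma d)^2=(\sigma c-d)^2\ge 0$ is the standard one-line verification, the Cauchy--Schwarz reading is an equivalent packaging of the same fact, and your observation that the hypothesis ``Let $a,b,\sigma\in\mathbb{R}$'' is a typographical slip for $c,d,\sigma$ (and that nonnegativity is not actually needed) is accurate.
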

		
		
		\begin{lemma}\rm (\cite{Faramarzi2019}) \label{Lem2}
			Let $\mathbb{R}^m$ be a Euclidean space.  Then 
			$$\la u,v \ra \leq \frac{1}{2}(\|u\|^2+\|v\|^2) \text{ for all } u,v \in \mathbb{R}^m.$$
		\end{lemma}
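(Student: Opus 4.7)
The plan is to derive this from the elementary non-negativity of a squared norm, which is the standard route to Young-type inequalities in an inner product space. The starting observation is that $\|u-v\|^2 \geq 0$ for every pair $u,v \in \mathbb{R}^m$, since the norm is induced by the inner product.

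First, I would expand the squared norm using bilinearity and symmetry of $\la \cdot,\cdot\ra$, obtaining
\begin{equation*}
0 \leq \|u-v\|^2 = \la u-v,u-v\ra = \|u\|^2 - 2\la u,v\ra + \|v\|^2.
\end{equation*}
Rearranging this inequality immediately gives $2\la u,v\ra \leq \|u\|^2 + \|v\|^2$, and dividing by $2$ yields the claim.

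There is essentially no obstacle here; the only thing to be slightly careful about is that the inequality is genuinely two-sided in spirit (one could just as well prove $|\la u,v\ra| \leq \tfrac12(\|u\|^2+\|v\|^2)$ by starting from $\|u\pm v\|^2 \geq 0$), but since the statement only requires the one-sided bound $\la u,v\ra \leq \tfrac12(\|u\|^2+\|v\|^2)$, the single expansion of $\|u-v\|^2$ suffices. No special assumption on $u$ or $v$ (such as non-zero, unit norm, or any sign condition) is needed, so the proof will be a two-line calculation.
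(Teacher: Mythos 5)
Your proof is correct and complete: expanding $\|u-v\|^2 \geq 0$ and rearranging is the standard argument, and no additional hypotheses are needed. The paper itself states this lemma without proof (citing Faramarzi and Amini), so there is no in-paper argument to compare against; your two-line calculation is exactly what one would expect.
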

		
		\begin{lemma}\rm \label{LemmaPre} 
			Let $\mathbb{R}^m$ be a Euclidean space, and let $\{u_n\}$ and $\{v_n\}$ be sequences in $\mathbb{R}^m$. Let $\sigma\in (0,\infty)$ and let $\{a_n\}$ and $\{b_n\}$ be two sequences in $(0,\infty)$ satisfying the following conditions:
			\begin{enumerate}
				\item[(C1)] $a_n>a>0$ for all $n \in \mathbb{N}$,
				\item[(C2)] $\la v_{n},u_{n}\ra <0$ for all $n \in \mathbb{N}$,
				\item[(C3)] $|\la v_{n+1},u_{n}\ra| \leq -\sigma \la v_{n},u_{n}\ra$ for all $n \in \mathbb{N}$.
			\end{enumerate}
			Suppose that
			\begin{equation}\label{pp1}
				u_{n+1}=-a_{n+1}v_{n+1}+b_nu_n \text{ for all } n \in \mathbb{N}.
			\end{equation}
			Then 
			\[\frac{\la v_{n+1}, u_{n+1}\ra^2}{\|u_{n+1}\|^2}+\frac{\la v_{n},u_n\ra^2}{\|u_n\|^2}\geq \frac{\|v_{n+1}\|^4}{\|u_{n+1}\|^2}\left[\frac{a^2}{1+\sigma^2}-a_{n+1}^2\frac{\la v_{n},u_n\ra^2}{\|u_n\|^2\|v_{n+1}\|^2}\right].\]
		\end{lemma}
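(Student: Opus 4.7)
The plan is to reduce the target inequality to a lower bound of the form
\[
\la u_{n+1},v_{n+1}\ra^2 + b_n^2 \la v_n,u_n\ra^2 \geq \frac{a^2 \|v_{n+1}\|^4}{1+\sigma^2},
\]
which I intend to extract from Lemma~\ref{Lem1} applied to a scalar identity pulled out of \eqref{pp1}.

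First, I would rewrite \eqref{pp1} as $b_n u_n = u_{n+1} + a_{n+1} v_{n+1}$ and take the inner product with $v_{n+1}$ to obtain
\[
a_{n+1}\|v_{n+1}\|^2 = b_n\la u_n,v_{n+1}\ra - \la u_{n+1},v_{n+1}\ra.
\]
Taking absolute values and using (C3) together with $\la v_n,u_n\ra<0$ from (C2), the positivity of $a_{n+1}$, $b_n$ and $\|v_{n+1}\|^2$ yields
\[
a_{n+1}\|v_{n+1}\|^2 \leq |\la u_{n+1},v_{n+1}\ra| + \sigma\bigl(-b_n\la v_n,u_n\ra\bigr).
\]
Setting $c=|\la u_{n+1},v_{n+1}\ra|$ and $d=-b_n\la v_n,u_n\ra$, both nonnegative, Lemma~\ref{Lem1} squares this to
\[
a_{n+1}^2\|v_{n+1}\|^4 \leq (1+\sigma^2)\bigl(\la u_{n+1},v_{n+1}\ra^2 + b_n^2\la v_n,u_n\ra^2\bigr),
\]
and combining with (C1) produces the desired lower bound, which I will call $(\star)$.

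Second, I would square $b_n u_n = u_{n+1}+a_{n+1}v_{n+1}$ to record
\[
\|u_{n+1}\|^2 + a_{n+1}^2\|v_{n+1}\|^2 = b_n^2\|u_n\|^2 - 2a_{n+1}\la u_{n+1},v_{n+1}\ra.
\]
Multiplying the target inequality through by $\|u_{n+1}\|^2$ and rearranging, it becomes equivalent to
\[
\la v_{n+1},u_{n+1}\ra^2 + \frac{\la v_n,u_n\ra^2}{\|u_n\|^2}\bigl(\|u_{n+1}\|^2 + a_{n+1}^2\|v_{n+1}\|^2\bigr) \geq \frac{a^2\|v_{n+1}\|^4}{1+\sigma^2}.
\]
Substituting the squared-recursion identity, the left-hand side turns into
\[
\la v_{n+1},u_{n+1}\ra^2 + b_n^2\la v_n,u_n\ra^2 - 2a_{n+1}\la u_{n+1},v_{n+1}\ra\,\frac{\la v_n,u_n\ra^2}{\|u_n\|^2}.
\]
The third summand is nonnegative because $a_{n+1}>0$ while $\la u_{n+1},v_{n+1}\ra<0$ by (C2); dropping it and invoking $(\star)$ closes the argument.

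I do not expect any genuine obstacle; the proof is essentially algebraic manipulation. The one step that requires a bit of foresight is recognising that Lemma~\ref{Lem1} should be fed the scalar identity obtained by projecting \eqref{pp1} onto $v_{n+1}$, while simultaneously expanding $\|u_{n+1}\|^2+a_{n+1}^2\|v_{n+1}\|^2$ through the squared recursion so that the leftover cross term $-2a_{n+1}\la u_{n+1},v_{n+1}\ra\,\la v_n,u_n\ra^2/\|u_n\|^2$ comes out with a sign that is safe to discard thanks to (C2).
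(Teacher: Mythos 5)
Your proof is correct and follows essentially the same route as the paper's: both project the recursion onto $v_{n+1}$, combine (C3) with Lemma~\ref{Lem1} to obtain the key bound $\la v_{n+1},u_{n+1}\ra^2+b_n^2\la v_n,u_n\ra^2\geq \frac{a^2}{1+\sigma^2}\|v_{n+1}\|^4$, and then use the squared recursion together with (C2) to discard the cross term $-2a_{n+1}\la v_{n+1},u_{n+1}\ra$. The only difference is organizational: the paper drops that cross term early to get a bound on $\|u_{n+1}\|^2/\|u_n\|^2$ and argues forward, whereas you reduce the target backward to the same ingredients.
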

		
		\begin{proof}
			Using \eqref{pp1} and condition (C2), we obtain 
			\begin{align*}
				b_n^2\|u_n\|^2&=\|u_{n+1}\|^2+ 2a_{n+1} \la  v_{n+1}, u_{n+1}\ra+a_{n+1}^2 \| v_{n+1}\|^2\\
				& \leq \|u_{n+1}\|^2+a_{n+1}^2\| v_{n+1}\|^2,
			\end{align*}
			which implies that 
			\begin{equation}\label{pp2}
				\frac{\|u_{n+1}\|^2}{\|u_n\|^2} \geq b_n^2-a_{n+1}^2 \frac{\| v_{n+1}\|^2}{\|u_n\|^2}.
			\end{equation}
			Again, using \eqref{pp1}, we have 
			\begin{equation}\label{ss1}
				-\la  v_{n+1}, u_{n+1}\ra +b_n \la  v_{n+1},u_n\ra =a_{n+1} \| v_{n+1}\|^2.
			\end{equation}
			It follows from condition (C3) that
			\begin{equation}\label{ss2}
				b_n \la v_{n+1},u_n\ra \leq b_n|\la v_{n+1},u_n\ra|  \leq -\sigma b_n \la v_{n},u_{n}\ra \leq \sigma b_n |\la v_{n},u_{n}\ra|.
			\end{equation}
			%
				Using \eqref{ss1} and \eqref{ss2}, we get
				\[	-\la v_{n+1},u_{n+1}\ra+\sigma b_n |\la u_n,v_{n}\ra| \geq a_{n+1} \|v_{n+1}\|^2.\]
			Squaring both sides and applying Lemma \ref{Lem1} 
			with $c=-\la v_{n+1},u_{n+1}\ra>0$ (from (C2)) and $d=b_n |\la v_n,u_n\ra|>0$ gives
			\begin{equation*}
				\la v_{n+1}, u_{n+1}\ra^2+b_n^2\la v_{n},u_n\ra^2\geq \frac{a_{n+1}^2}{1+\sigma^2}\|v_{n+1}\|^4.
			\end{equation*}
			Since  $a_{n+1}>a$, it follows that
			\begin{align*}
				\la v_{n+1}, u_{n+1}\ra^2+b_n^2\la v_{n},u_n\ra^2 & \geq \frac{a^2}{1+\sigma^2}\|v_{n+1}\|^4,
			\end{align*}
			which implies that 
			\begin{align}
				&	\frac{\la v_{n+1},u_{n+1}\ra^2}{\|u_{n+1}\|^2}+\frac{\la v_{n},u_n\ra^2}{\|u_n\|^2}\nonumber\\
				&=\frac{1}{\|u_{n+1}\|^2} \left[\la v_{n+1},u_{n+1}\ra^2+\frac{\|u_{n+1}\|^2}{\|u_n\|^2}\la v_{n},u_n\ra^2\right]\nonumber\\
				&\geq \frac{1}{\|u_{n+1}\|^2} \left[\frac{a^2}{1+\sigma^2}\|v_{n+1}\|^4+\left(\frac{\|u_{n+1}\|^2}{\|u_n\|^2}-b_n^2\right)\la v_n,u_n\ra^2\right].\label{pp4}
			\end{align}
			Using \eqref{pp2} and \eqref{pp4}, we conclude that
			\begin{equation*}
				\frac{\la v_{n+1}, u_{n+1}\ra^2}{\|u_{n+1}\|^2}+\frac{\la v_{n},u_n\ra^2}{\|u_n\|^2}\geq \frac{\|v_{n+1}\|^4}{\|u_{n+1}\|^2}\left[\frac{a^2}{1+\sigma^2}-a_{n+1}^2\frac{\la v_{n},u_n\ra^2}{\|u_n\|^2\|v_{n+1}\|^2}\right].
			\end{equation*}
%
			
		\end{proof}
		
		\begin{lemma}\rm (\cite{Zoutendijk}) \label{FarLem1}
			Let $f:\mathbb{R}^n \to \mathbb{R}$ be such that assumptions (A1) and (A2) hold. For any iterative method of the form \eqref{SearchDM} with $d_n$ being a descent direction and $\alpha_n$ determined by the strong Wolfe conditions \eqref{AFareq28} and \eqref{AFareq29}, we have
				\[	\sum_{n=0}^\infty \frac{\la \nabla f(x_n), d_n\ra^2}{\|d_n\|^2}<\infty.\]
			\end{lemma}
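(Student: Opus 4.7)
The plan is to follow the classical Zoutendijk argument, using the two strong Wolfe conditions together with Lipschitz continuity of the gradient to turn a per-step lower bound on the sufficient decrease into a summability statement.

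First I would use the curvature-type condition \eqref{AFareq29} to derive a lower bound on the stepsize $\alpha_n$. Subtracting $\la \nabla f(x_n),d_n\ra$ from both sides of the bound $|\la \nabla f(x_{n+1}),d_n\ra|\le -\sigma\la \nabla f(x_n),d_n\ra$, I obtain
\[
\la \nabla f(x_{n+1})-\nabla f(x_n),d_n\ra \;\ge\; (\sigma-1)\la \nabla f(x_n),d_n\ra.
\]
By assumption (A2), the left-hand side is at most $L\alpha_n\|d_n\|^2$. Since $\sigma<1$ and $\la \nabla f(x_n),d_n\ra<0$, rearranging yields
\[
\alpha_n \;\ge\; \frac{1-\sigma}{L}\cdot\frac{-\la \nabla f(x_n),d_n\ra}{\|d_n\|^2}.
\]

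Next I would plug this into the Armijo-type condition \eqref{AFareq28}. Because $\la \nabla f(x_n),d_n\ra<0$, multiplying the stepsize bound by $\delta\la \nabla f(x_n),d_n\ra$ reverses the inequality, giving
\[
\delta\alpha_n\la \nabla f(x_n),d_n\ra \;\le\; -\frac{\delta(1-\sigma)}{L}\cdot\frac{\la \nabla f(x_n),d_n\ra^2}{\|d_n\|^2}.
\]
Combined with \eqref{AFareq28}, this produces a clean per-step decrease:
\[
f(x_{n+1}) \;\le\; f(x_n) - \frac{\delta(1-\sigma)}{L}\cdot\frac{\la \nabla f(x_n),d_n\ra^2}{\|d_n\|^2}.
\]

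Finally I would telescope. Since every iterate stays in the level set $C_0$ (the descent property $f(x_{n+1})\le f(x_n)$ guarantees $x_n\in C_0$), assumption (A1) together with continuity of $f$ implies that $\{f(x_n)\}$ is bounded below by some $f_\ast$. Summing from $n=0$ to $N$ and letting $N\to\infty$ gives
\[
\sum_{n=0}^\infty \frac{\la \nabla f(x_n),d_n\ra^2}{\|d_n\|^2} \;\le\; \frac{L}{\delta(1-\sigma)}\bigl(f(x_0)-f_\ast\bigr)\;<\;\infty,
\]
which is the desired conclusion.

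The proof is essentially routine; the only delicate point is managing the signs when transferring the curvature condition and the Armijo condition into each other, since $\la \nabla f(x_n),d_n\ra$ is negative and $\sigma-1$ is negative. Once the stepsize lower bound is written in its final positive form, the rest is a telescoping argument requiring no further ideas.
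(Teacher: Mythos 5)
Your proof is correct and is exactly the classical Zoutendijk argument that the paper invokes by citation (the lemma is stated with a reference rather than proved in the text): a stepsize lower bound from the curvature condition plus Lipschitz continuity, substitution into the Armijo condition, and a telescoping sum using boundedness of $f$ on the level set. The sign manipulations are handled correctly, so there is nothing to add.
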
\rm 
			\ \\
			It follows from Lemma \ref{FarLem1} that 
			\begin{equation}\label{Fareq36}
				\lim_{n \to \infty}\frac{\la \nabla f(x_n), d_n\ra^2}{\|d_n\|^2}=0.
			\end{equation}
			
			\section{A modified descent Dai-Liao spectral conjugate gradient method and its convergence analysis}
			
			
			In this section we introduce a modified spectral conjugate gradient (CG) descent method, inspired by ideas from quasi-Newton methods.

			Using the theoretical properties provided by \eqref{MSCeq2} and the Dai-Liao conjugate gradient parameter \eqref{DLmethod}, we propose the following modified Dai-Liao conjugate parameter:
			\begin{equation}\label{MDLmethod}
				\beta_n = \frac{\langle \nabla f(x_{n+1}), z_n \rangle}{\langle d_n, z_n \rangle} - t \frac{\langle \nabla f(x_{n+1}), s_n \rangle}{\langle d_n, z_n \rangle} \text{ for all } n \in \mathbb{N}_0,
			\end{equation}
			where $t$ is a nonnegative parameter.
			
			Note that, by \eqref{Search_direction} and \eqref{MDLmethod}, the search direction can be expressed as
			\begin{equation*}
				d_{n+1} = -Q_{n+1} \nabla f(x_{n+1}) \text{ for all } n \in \mathbb{N}_0,
			\end{equation*}
			where
			\[
			Q_{n+1} = I - \frac{s_n z_n^T}{\langle s_n, z_n \rangle} + t \frac{s_n s_n^T}{\langle s_n, y_n \rangle}.
			\]
			Following the methodology in \cite{KafakiGh2014}, we propose the following formula for the parameter $t$ in the modified Dai-Liao conjugate parameter \eqref{MDLmethod}:
			\begin{equation}\label{t_MDDL}
				t_n^{p,q} = p \frac{\|z_n\|^2}{\langle s_n, z_n \rangle} - q \frac{\langle s_n, z_n \rangle}{\|s_n\|^2} \text{ for all } n \in \mathbb{N}_0
			\end{equation}
			with $p \in \left( \frac{1}{4},\infty \right)$ and $q \in \left(-\infty, \frac{1}{4}\right)$. Using this parameter $t_n^{p,q}$, we define the following modified descent Dai-Liao (MDDL) conjugate parameter:
			\begin{equation}\label{CG_MDDL}
				\beta_n^{MDDL} = \frac{\langle \nabla f(x_{n+1}), z_n \rangle}{\langle d_n, z_n \rangle} - t_n^{p,q} \frac{\langle \nabla f(x_{n+1}), s_n \rangle}{\langle d_n, z_n \rangle} \text{ for all } n \in \mathbb{N}_0.
			\end{equation}
			Now, we consider the following spectral conjugate gradient search direction:
			\begin{equation}\label{Spect_direc}
				d_{n}=	\begin{cases}
					-\nabla f(x_0), & \text{ if } n=0, \\
					-\theta_{n} \nabla f(x_{n}) + \beta_{n-1}^{MDDL} d_{n-1} & \text{ if } n\geq 1,
				\end{cases}
			\end{equation}
			where $\{\theta_{n}\}$ is a sequence in $(0,\infty)$.

			\begin{proposition}\rm 
				Let $f:\mathbb{R}^m \to \mathbb{R}$ be a differentiable function and let $\{d_n\}$ be a sequence of spectral CG search directions generated by \eqref{Spect_direc}. Then the following statements hold:
				
				\begin{itemize}
					\item [(a)] For all  $ n \in \mathbb{N}_0,$ we have\begin{equation}\label{AFareq19}
						\la \nabla f(x_{n}),d_{n} \ra \leq -\kappa_n\|\nabla f(x_{n})\|^2,
					\end{equation}
					where \begin{equation*}
						\kappa _{n}=\left\{ 
						\begin{array}{ll}
							1, & \text{if }n=0, \\ 
							\theta _{n}-\left( \frac{1}{4p}+|q|\right),  & \text{if }n\geq 1.%
						\end{array}%
						\right. 
					\end{equation*}%
					\item [(b)]   If $\theta_{n}>\frac{1}{4p}+|q|$ for all $n \in \mathbb{N}$, then the direction given by \eqref{Spect_direc} is a descent direction.
				\end{itemize}
				
			\end{proposition}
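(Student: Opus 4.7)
The plan is to verify (a) by direct expansion of the recursion $d_n=-\theta_n\nabla f(x_n)+\beta_{n-1}^{MDDL}d_{n-1}$, and then to obtain (b) as a one-line corollary of (a).

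The base case $n=0$ is trivial: $d_0=-\nabla f(x_0)$ yields $\langle\nabla f(x_0),d_0\rangle=-\|\nabla f(x_0)\|^2$, so $\kappa_0=1$ works. For $n\geq 1$, I would take the inner product of the recursion with $\nabla f(x_n)$ to get
\[
\langle\nabla f(x_n),d_n\rangle=-\theta_n\|\nabla f(x_n)\|^2+\beta_{n-1}^{MDDL}\langle\nabla f(x_n),d_{n-1}\rangle,
\]
then substitute the definition \eqref{CG_MDDL} of $\beta_{n-1}^{MDDL}$ and the definition \eqref{t_MDDL} of $t_{n-1}^{p,q}$. Using $s_{n-1}=\alpha_{n-1}d_{n-1}$ and the consequent identity $\alpha_{n-1}\langle d_{n-1},z_{n-1}\rangle=\langle s_{n-1},z_{n-1}\rangle$, every occurrence of $d_{n-1}$ can be replaced by $s_{n-1}$, so writing $g=\nabla f(x_n)$, $s=s_{n-1}$, $z=z_{n-1}$, the identity reduces to
\[
\langle g,d_n\rangle=-\theta_n\|g\|^2+\frac{\langle g,z\rangle\langle g,s\rangle}{\langle s,z\rangle}-p\frac{\|z\|^2\langle g,s\rangle^2}{\langle s,z\rangle^2}+q\frac{\langle g,s\rangle^2}{\|s\|^2}.
\]

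The central step is bounding the middle two terms. Viewing them as a quadratic in the variable $X=\langle g,s\rangle/\langle s,z\rangle$, I would complete the square,
\[
\frac{\langle g,z\rangle\langle g,s\rangle}{\langle s,z\rangle}-p\frac{\|z\|^2\langle g,s\rangle^2}{\langle s,z\rangle^2}=-\|z\|^2\Bigl(\sqrt{p}\,X-\tfrac{\langle g,z\rangle}{2\sqrt{p}\|z\|^2}\Bigr)^{2}+\frac{\langle g,z\rangle^2}{4p\|z\|^2},
\]
and then apply Cauchy–Schwarz, $\langle g,z\rangle^2\leq\|g\|^2\|z\|^2$, to obtain the bound $\|g\|^2/(4p)$. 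This is the one genuinely delicate calculation and is essentially the $1/(4p)$ phenomenon inherited from the eigenvalue analysis in \cite{KafakiGh2014}.

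For the last term, Cauchy–Schwarz gives $\langle g,s\rangle^2/\|s\|^2\leq\|g\|^2$, and since a trivial sign check handles both $q\geq 0$ and $q<0$, one concludes $q\,\langle g,s\rangle^2/\|s\|^2\leq|q|\|g\|^2$. Combining the three bounds yields
\[
\langle\nabla f(x_n),d_n\rangle\leq -\bigl(\theta_n-\tfrac{1}{4p}-|q|\bigr)\|\nabla f(x_n)\|^2,
\]
which is (a). Part (b) follows immediately: if $\theta_n>\tfrac{1}{4p}+|q|$ for all $n\geq 1$, then $\kappa_n>0$ for all $n\in\mathbb{N}_0$, so $\langle\nabla f(x_n),d_n\rangle<0$ whenever $\nabla f(x_n)\neq 0$, giving the descent property. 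The only real obstacle is the algebraic completing-the-square step above; everything else is bookkeeping.
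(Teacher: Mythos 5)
Your proposal is correct and follows essentially the same route as the paper: expand the recursion, isolate the three non-spectral terms, bound the cross term by $\frac{1}{4p}\|\nabla f(x_n)\|^2$, and bound the $q$-term via Cauchy--Schwarz. The only cosmetic difference is that where you complete the square in $X=\langle g,s\rangle/\langle s,z\rangle$ and then apply Cauchy--Schwarz, the paper reaches the identical bound by applying its Lemma \ref{Lem2} (the inequality $\la u,v\ra\leq\frac{1}{2}(\|u\|^2+\|v\|^2)$) to the scaled vectors $u=\frac{1}{\sqrt{2p}}F_{n+1}$ and $v=\frac{\sqrt{2p}\la F_{n+1},d_n\ra z_n}{\la d_n,z_n\ra}$; these are the same estimate in two guises.
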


			\begin{proof}(a)
				Note that $\la \nabla f(x_0),d_0\ra =-\|\nabla f(x_0)\|^2$.  Therefore inequality \eqref{AFareq19} holds for $n=0$. We claim that the inequality \eqref{AFareq19} holds for $n>0$. Let $n\in \mathbb{N}_0$. 	Note that  $p \in \left( \frac{1}{4},\infty \right)$ and $q \in \left(-\infty, \frac{1}{4}\right)$. For simplicity, let us denote $\nabla f(x_{n+1})=F_{n+1}$. Then, using \eqref{t_MDDL}, \eqref{CG_MDDL} and \eqref{Spect_direc}, we get
				\begin{align*}
					&\la F_{n+1},d_{n+1}\ra \\
					&=\la F_{n+1},-\theta_{n+1} F_{n+1}+\beta_{n}^{MDDL}d_{n}\ra\\
					&=-\theta_{n+1}\|F_{n+1}\|^2+\frac{\la F_{n+1},z_{n}\ra \la F_{n+1},d_{n}\ra}{\la d_{n},z_{n}\ra}-\frac{t_n^{p,q}\la s_{n},F_{n+1}\ra \la d_{n},F_{n+1}\ra}{\la d_{n},z_{n}\ra}\\
					&= -\theta_{n+1}\|F_{n+1}\|^2+\frac{\la F_{n+1},z_n\ra \la F_{n+1},d_{n}\ra}{\la d_{n},z_n\ra}\\
					&\quad-\left(p\frac{\|z_n\|^2}{\la s_n,z_n\ra}-q\frac{\la s_n,z_n\ra}{\|s_n\|^2}\right)\frac{\la s_{n},F_{n+1}\ra \la d_{n},F_{n+1}\ra}{\la d_{n},z_n\ra}\\
					&= -\theta_{n+1}\|F_{n+1}\|^2+\frac{\la F_{n+1},z_n\ra \la F_{n+1},d_{n}\ra}{\la d_{n},z_n\ra}-p\frac{\|z_n\|^2}{\la s_n,z_n\ra}\frac{\la s_{n},F_{n+1}\ra \la d_{n},F_{n+1}\ra}{\la d_{n},z_n\ra} \\
					& \quad+q\frac{\la s_n,z_n\ra}{\|s_n\|^2}\frac{\la s_{n},F_{n+1}\ra \la d_{n},F_{n+1}\ra}{\la d_{n},z_n\ra}.
				\end{align*}

				Since $s_n=\alpha_n d_n$, we have
				\allowdisplaybreaks
				\begin{align}
					\la 	F_{n+1},d_{n+1} \ra
					& = -\theta_{n+1}\|F_{n+1}\|^2+\frac{\la F_{n+1},z_n\ra \la F_{n+1},d_{n}\ra}{\la d_{n},z_n\ra}\nonumber\\
					& \quad-p\frac{\|z_n\|^2\la d_{n},F_{n+1}\ra^2}{\la d_{n},z_n\ra} +q\frac{\la d_n,z_n\ra}{\|d_n\|^2}\frac{\la d_{n},F_{n+1}\ra^2}{\la d_{n},z_n\ra} \nonumber\\
					&= -\theta_{n+1}\|F_{n+1}\|^2+\frac{\la F_{n+1},z_n\ra \la F_{n+1},d_{n}\ra}{\la d_{n},z_n\ra}\nonumber\\
					& \quad -p\frac{\|z_n\|^2\la d_{n},F_{n+1}\ra^2}{\la d_{n},z_n\ra^2}+q\frac{\la d_{n},F_{n+1}\ra^2}{\|d_{n}\|^2}. \label{AFareq21''}
				\end{align}
				Using Lemma \ref{Lem2} with
				$u_n=\frac{1}{\sqrt{2p}}F_{n+1}$ and $v_n=\frac{\sqrt{2p}\la F_{n+1},d_{n}\ra z_n}{\la d_{n},z_n\ra},$ 
				we obtain
				\begin{equation}\label{AFareq21}
					\frac{\la F_{n+1},d_{n}\ra \la F_{n+1},z_n\ra}{\la d_{n},z_n\ra} \leq \frac{1}{4p}\|F_{n+1}\|^2+\frac{p\|z_n\|^2\la d_{n},F_{n+1}\ra^2}{\la d_{n},z_n\ra^2}.
				\end{equation}
				On the other hand, by the Cauchy-Schwarz inequality, we have
				\begin{equation}\label{AFareq21'}
					q\frac{\la d_{n},F_{n+1}\ra^2}{\|d_{n}\|^2}\leq|q| \frac{\|d_{n}\|^2\|F_{n+1}\|^2}{\|d_{n}\|^2} = |q|\|F_{n+1}\|^2.
				\end{equation}
				Combining \eqref{AFareq21''}, \eqref{AFareq21} and \eqref{AFareq21'}, we conclude that
				\begin{align*}
					\la F_{n+1},d_{n+1}\ra&\le-\theta_{n+1}\|F_{n+1}\|^2+\frac{1}{4p}\|F_{n+1}\|^2+ |q|\|F_{n+1}\|^2\\
					&=-\left(\theta_{n+1}-\left(\frac{1}{4p}+|q|\right)\right)\|F_{n+1}\|^2.
				\end{align*}
				(b) Suppose that $\theta_{n}>\frac{1}{4p}+|q|$ for all $n \in \mathbb{N}$.     Then, employing \eqref{AFareq19}, we get $\la \nabla f(x_{n}),d_{n}\ra < 0$ for all $n\in \mathbb{N}_0$. Therefore, $\{d_n\}$ is a descent direction as claimed.
			\end{proof}

			Now, we construct the spectral parameter for our conjugate gradient method based on the conjugate parameter $\beta_n^{MDDL}$.
			
			Quasi-Newton methods are a prominent class of iterative optimization methods that avoid computing the exact Hessian at every step. Their main goal is to retain the fast convergence associated with Newton’s method while keeping the computational burden low. Rather than evaluating the true Hessian, these methods update an approximate one at each iteration. Motivated by the structure of quasi-Newton directions, the search direction $d_n$ in \eqref{Spect_direc} is constructed to emulate this behavior. Our objective, therefore, is to choose an appropriate spectral parameter $\theta_{n+1}$ such that 			
			$$-B_{n+1}^{-1}\nabla f(x_{n+1}) \approx -\theta_{n+1}\nabla f(x_{n+1})+\beta_n^{MDDL}d_n,$$
			where $B_{n+1}$ is an approximation of the Hessian matrix $\nabla^2f(x_{n+1})$.
			Taking the inner product with $B_{n+1}s_n$, we find that 
			$$-\la s_n,\nabla f(x_{n+1}) \ra \approx -\theta_{n+1}\la B_{n+1}s_n,\nabla f(x_{n+1})\ra+\beta_n^{MDDL}\la B_{n+1}s_n,d_n\ra,$$
			which implies that 
			\begin{equation}\label{AFareq222}
			\theta_{n+1}\approx \frac{1}{\la B_{n+1}s_n,\nabla f(x_{n+1})\ra}(\la s_n,\nabla f(x_{n+1})\ra +\beta_n^{MDDL}\la B_{n+1}s_n,d_n\ra).
			\end{equation}
			From \eqref{MSCeq1}, $B_{n+1}$ must satisfy the modified secant condition $B_{n+1}s_n=z_n$. It follows from \eqref{AFareq222} that

			\begin{align}
				{\theta_{n+1}}
				&\approx \frac{1}{\la z_n,\nabla f(x_{n+1})\ra}(\la s_n,\nabla f(x_{n+1})\ra+\beta_n^{MDDL}\la z_n,d_n\ra)\nonumber\\
				& =\frac{1}{\la z_n,\nabla f(x_{n+1})\ra}\left(\la s_n,\nabla f(x_{n+1})\ra+\left( \frac{\la\nabla f(x_{n+1}), z_n\ra}{\la d_n, z_n\ra } \right. \right. \nonumber\\
				& \quad  \left. \left. -t_n^{p,q} \frac{\la \nabla f(x_{n+1}),s_n\ra}{\la d_n,z_n\ra} \right) \la z_n,d_n\ra\right)\nonumber\\
				& =\frac{1}{\la z_n,\nabla f(x_{n+1})\ra}\left(\la s_n,\nabla f(x_{n+1})\ra+	\frac{\la \nabla f(x_{n+1}), z_n\ra \la z_n,d_n\ra}{\la d_n, z_n\ra} \right.\nonumber\\
				& \quad \left.  -t_n^{p,q} \frac{\la \nabla f(x_{n+1}),s_n\ra\la z_n,d_n\ra }{\la d_n,z_n\ra} \right)\nonumber\\
				& =\frac{\la s_n,\nabla f(x_{n+1})\ra+	\la \nabla f(x_{n+1}), z_n\ra-t_n^{p,q} \la \nabla f(x_{n+1}),s_n\ra }{\la z_n,\nabla f(x_{n+1})\ra}\nonumber\\
				&=1-\frac{{t_n^{p,q} \la s_n, \nabla f(x_{n+1})\ra }- \la s_n,\nabla f(x_{n+1})\ra}{\la z_n, \nabla f(x_{n+1})\ra} \label{AFareq23'}\\
				&= 1-(t_n^{p,q}-1)\frac{\la s_n,\nabla f(x_{n+1})\ra}{\la z_n, \nabla f(x_{n+1})\ra}. \label{N+}
			\end{align}
		 Neglecting $\la s_n,\nabla f(x_{n+1})\ra$ in
			\eqref{AFareq23'}, we arrive at
			\begin{equation}\label{N-}
				{\theta_{n+1}}=1-\frac{t_n^{p,q} \la s_n, \nabla f(x_{n+1})\ra}{\la z_n, \nabla f(x_{n+1})\ra}.
			\end{equation}
			We denote the  $\theta_{n+1}$ obtained in equation \eqref{N+} by $\theta_{n+1}^{R}$ and the $\theta_{n+1}$ obtained in equation \eqref{N-} by $\theta_{n+1}^{N}$.
To ensure that the sufficient-descent condition holds and to keep the spectral parameters bounded, we introduce the following choices for the spectral parameters:
			\begin{equation}\label{AFareq251}
				\theta_{n+1}=\begin{cases}
					\theta_{n+1}^{R}, & \text{ if } \theta_{n+1}^{R}\in \left[\frac {1}{4p}+|q|+\eta, \tau\right],\\
					1, & \text{ otherwise }
				\end{cases}
			\end{equation}
			or
			\begin{equation}\label{AFareq261}
				\theta_{n+1}=\begin{cases}
					\theta_{n+1}^{N}, & \text{ if } \theta_{n+1}^{N}\in \left[\frac{1}{4p}+|q|+\eta, \tau\right],\\
					1, & \text{ otherwise },
				\end{cases}
			\end{equation}
			where $\eta$ and $\tau$ are positive constants. Relations \eqref{AFareq251} and \eqref{AFareq261} lead to  
			\begin{equation}\label{AFareq261a}
				\frac{1}{4p}+|q|+\eta \leq \theta_{n+1} \leq \tau\textrm{ for all }n\in \mathbb{N}_0.
			\end{equation}

			Now, we propose the following modified descent Dai-Liao spectral conjugate gradient method (MDDLSCG):

			%
			
			\begin{algorithm}[H]
				\label{MDDLSCG}
				\caption{MDDLSCG Algorithm}
				\textbf{Input:} Let $x_0 \in \mathbb{R}^n$ and let $\epsilon, \delta,  \sigma, \eta, \tau, r,\nu \in (0,\infty)$ be such that  \\
				\qquad \quad  ~ $0 < \delta < \sigma < 1$.\\
				\textbf{Step 0:} Set $d_0 = -\nabla f(x_0)$.\\
				\textbf{Step 1:} If $\|\nabla f(x_n)\|_\infty < \varepsilon$, stop, where
				$\|u\|_\infty = \max_{1 \le i \le m} |u_i|$ for\\
				\qquad \qquad $u = (u_1, \ldots, u_m) \in \mathbb{R}^m$.\\
				\textbf{Step 2:}	Determine the step length $\alpha_n$ satisfying the strong Wolfe \\
				\qquad \qquad conditions \eqref{AFareq28} and \eqref{AFareq29}.\\
				\textbf{Step 3:} Set $x_{n+1} = x_n + \alpha_n d_n$.\\
				\textbf{Step 4:} Compute $d_{n+1}$ by
				\[d_{n+1}=-\theta_{n+1}\nabla f(x_{n+1})+\beta_n^{MDDL}d_{n} \text{ for all } n \in \mathbb{N}_0,\]
				\qquad \qquad  where the CG parameter $\beta_n^{MDDL}$ is given by \eqref{CG_MDDL} and \\
					\qquad \qquad ~  the spectral 
			 parameter $\theta_n$ is given by \eqref{AFareq251} or \eqref{AFareq261}.\\
				\textbf{Step 5:} Set $n = n + 1$ and return to Step 1.
			\end{algorithm}
			
			
			\begin{remark}\rm\label{Remark2}(i)
				Using $\theta_{n}\geq \frac{1}{4p}+|q| + \eta$ and \eqref{AFareq19}, we conclude that
				\begin{equation}\label{AFareq27}
					\la \nabla f(x_{n}),d_{n}\ra \leq -\eta\|\nabla f(x_{n})\|^2\text{ for all } n \in \mathbb{N}_0.
				\end{equation}
				(ii) If $\|\nabla f(x_n)\|\ne0$, then using
				\eqref{AFareq29} and \eqref{AFareq27}, we see that
				\begin{equation}\label{Fareq30}
					\la d_n,y_n\ra\geq -(1-\sigma) \la \nabla f(x_n),d_n\ra\geq (1-\sigma)\eta\|\nabla f(x_n)\|^2>0,
				\end{equation}
				which, by (\ref{MSCeq2}), implies  that $h_n=\nu$ and hence
				\begin{equation}\label{Fareq31}
					z_n=y_n+\nu\|\nabla f(x_n)\|^rs_n.
				\end{equation}
			\end{remark}

			Now, we analyze the global convergence  of the 
			MDDLSCG Algorithm. 
			
			\begin{remark}\rm 
				\begin{enumerate}
					\item[(i)] Employing \eqref{AFareq28} and \eqref{AFareq27}, we can see that 
					$\{f(x_n)\}$ is decreasing. Thus, the sequence $\{x_n\}$ generated by Algorithm \ref{MDDLSCG} is contained in $C_0$.
					
					\item[(ii)] From Assumption (A1), there exists $M>0$ such that $\|x\|\leq M$ for all $x\in C_0$. Then using Assumption (A2), we have 
					\begin{align}\label{Fareq34}
						\Vert \nabla f(x)\Vert & \leq \Vert \nabla f(x)-\nabla f(x_{0})\Vert +\Vert
						\nabla f(x_{0})\Vert \notag \\
						& \leq L\Vert x-x_{0}\Vert +\Vert \nabla f(x_{0})\Vert  \notag  \\
						& \leq 2ML+\Vert \nabla f(x_{0})\Vert =\gamma \text{ for all }x\in C_{0}.
					\end{align}%
				\end{enumerate}
			\end{remark}
			
			In addition, suppose that $\nabla f(x_n)\neq 0$ for all $n\in \mathbb{N}_0$; otherwise, a stationary point will be found.

			\begin{lemma}\rm \label{FarLem2}
				Let $f:\mathbb{R}^n \to \mathbb{R}$ be a function and let $\{x_n\}$ be the   sequence generated by Algorithm \ref{MDDLSCG} such that Assumptions (A1) and (A2) hold.  Suppose that $\theta_n >\frac{1}{4p}+|q|$ for all $n\in \mathbb{N}$.  If $	\liminf_{n \to \infty} \|\nabla f(x_n)\| \neq 0$, then  $\sum_{n=0}^\infty \frac{\|\nabla f(x_n)\|^4}{\|d_n\|^2}<\infty.$ 
			\end{lemma}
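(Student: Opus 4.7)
The plan is to derive the stated summability as a direct consequence of Zoutendijk's condition (Lemma \ref{FarLem1}) combined with the sufficient-descent bound already packaged in Remark \ref{Remark2}(i). In particular, the $\liminf$ hypothesis is not actually needed for the summability claim itself; it is presumably retained because this lemma will be used in a contradiction step inside the main global-convergence theorem, where a positive lower bound on $\|\nabla f(x_n)\|$ will combine with the conclusion to force $\sum 1/\|d_n\|^2<\infty$, to be contradicted by a growth bound on $\|d_n\|$.

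First I would observe that the spectral parameter $\theta_n$ produced by Algorithm \ref{MDDLSCG} via either \eqref{AFareq251} or \eqref{AFareq261} satisfies the two-sided bound \eqref{AFareq261a}, so $\theta_n\geq \tfrac{1}{4p}+|q|+\eta$ for every $n\in\mathbb{N}_0$. Applying part (a) of the Proposition preceding Remark \ref{Remark2}, together with \eqref{AFareq27}, this gives the sufficient-descent inequality
\[
\la \nabla f(x_n), d_n\ra \leq -\eta\|\nabla f(x_n)\|^2 \quad \text{for all } n\in\mathbb{N}_0,
\]
so in particular $\{d_n\}$ is a descent direction with a uniform sufficient-descent constant $\eta$.

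Next I would invoke Zoutendijk's lemma: since Assumptions (A1) and (A2) hold, $\{d_n\}$ is a descent direction, and the step sizes $\alpha_n$ are chosen in Step~2 of Algorithm \ref{MDDLSCG} to satisfy the strong Wolfe conditions \eqref{AFareq28}--\eqref{AFareq29}, Lemma \ref{FarLem1} yields
\[
\sum_{n=0}^{\infty} \frac{\la \nabla f(x_n),d_n\ra^2}{\|d_n\|^2} < \infty.
\]
Squaring the sufficient-descent inequality and dividing by $\|d_n\|^2$ produces $\eta^{2}\|\nabla f(x_n)\|^{4}/\|d_n\|^{2} \leq \la \nabla f(x_n),d_n\ra^{2}/\|d_n\|^{2}$, and summing then gives
\[
\sum_{n=0}^{\infty} \frac{\|\nabla f(x_n)\|^{4}}{\|d_n\|^{2}} \leq \frac{1}{\eta^{2}} \sum_{n=0}^{\infty} \frac{\la \nabla f(x_n),d_n\ra^{2}}{\|d_n\|^{2}} < \infty,
\]
which is the desired conclusion.

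There is no serious obstacle: once the sufficient-descent bound \eqref{AFareq27} is quoted from Remark \ref{Remark2}(i) and Zoutendijk's inequality is quoted from Lemma \ref{FarLem1}, the result is a one-line comparison. The only point requiring a moment's care is noting that the algorithmic $\theta_n$ actually satisfies the stronger inequality $\theta_n\geq \tfrac{1}{4p}+|q|+\eta$ (immediate from \eqref{AFareq261a}), rather than only the weaker hypothesis $\theta_n>\tfrac{1}{4p}+|q|$ appearing in the lemma statement; this is what makes the descent constant $\eta$ uniform in $n$ and thus extractable from the sum.
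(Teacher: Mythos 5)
Your proof is correct, and it takes a genuinely different and more elementary route than the paper's. You obtain the summability in one step from the uniform sufficient-descent bound \eqref{AFareq27} (guaranteed by the safeguard $\theta_n\ge\frac{1}{4p}+|q|+\eta$ in \eqref{AFareq261a}) together with Zoutendijk's condition (Lemma \ref{FarLem1}): since $\la \nabla f(x_n),d_n\ra\le-\eta\|\nabla f(x_n)\|^2\le 0$, squaring gives $\eta^2\|\nabla f(x_n)\|^4\le\la \nabla f(x_n),d_n\ra^2$, and the comparison $\sum_n\|\nabla f(x_n)\|^4/\|d_n\|^2\le\eta^{-2}\sum_n\la \nabla f(x_n),d_n\ra^2/\|d_n\|^2<\infty$ finishes the argument. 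The paper instead routes the proof through Lemma \ref{LemmaPre}: it uses the hypothesis $\liminf_{n\to\infty}\|\nabla f(x_n)\|\neq 0$ to deduce \eqref{Fareq39}, applies Lemma \ref{LemmaPre} with $u_n=d_n$, $v_n=\nabla f(x_n)$, $a_n=\theta_n$ and $a=\frac{1}{4p}+|q|$, and shows that for all large $n$ the quantity $B\,\|\nabla f(x_{n+1})\|^4/\|d_{n+1}\|^2$ is dominated by the sum of two consecutive Zoutendijk terms, whence the series converges. Your observation that the $\liminf$ hypothesis is not actually needed for the summability claim is accurate; the paper requires it only because its route passes through \eqref{Fareq39}, and the hypothesis is then reused in the contradiction argument of the main theorem exactly as you predict. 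The trade-off between the two approaches is that the paper's argument, in the style of classical conjugate-gradient analyses, only needs the strict inequality $\theta_n>\frac{1}{4p}+|q|$ stated in the lemma (together with the upper bound $\theta_n\le\tau$), whereas your shortcut relies on the uniform margin $\eta$ supplied by the truncation rules \eqref{AFareq251}--\eqref{AFareq261}; since the lemma concerns sequences generated by Algorithm \ref{MDDLSCG}, that margin is available and your argument is complete.
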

			\begin{proof}
				Since $	\liminf_{n \to \infty} \|\nabla f(x_n)\| \neq 0$, there exists $\epsilon>0$ such that $\|\nabla f(x_n)\|\geq \epsilon$ for all $n \in \mathbb{N}_0$. When combined with \eqref{Fareq36}, this implies  that 
				\begin{equation}\label{Fareq39}
					\lim_{n \to \infty} \frac{\la \nabla f(x_n), d_n\ra^2}{\|d_n\|^2 \|\nabla f(x_{n+1})\|^2} =0.
				\end{equation}
				Let  $u_n=d_n$, $v_n=\nabla f(x_n)$, $a_n=\theta_n$ and $b_n=\beta_n$ in Lemma \ref{LemmaPre}.  Since $\theta_{n}>\frac{1}{4p}+|q|>0$, it follows from Remark \ref{Remark2} that   $\la \nabla f(x_n),d_n\ra<0$ for all $n\in \mathbb{N}_0$. Also from strong Wolfe condition \eqref{AFareq29}, we have $|\la \nabla f(x_{n+1}), d_n\ra|\leq -\sigma \la \nabla f(x_n), d_n\ra$ for all $n\in \mathbb{N}_0$.
				Then, clearly all the conditions (C1), (C2) and (C3) of Lemma \ref{LemmaPre} are satisfied. Thus, we have
				\begin{equation*}
					\frac{\la \nabla f(x_{n+1}), d_{n+1}\ra^2}{\|d_{n+1}\|^2}+\frac{\la \nabla f(x_n),d_n\ra^2}{\|d_n\|^2}  \geq \frac{\|\nabla f(x_{n+1})\|^4}{\|d_{n+1}\|^2}\left[B- \theta_{n+1}^2\frac{\la \nabla f(x_n),d_n\ra^2}{\|d_n\|^2\|\nabla f(x_{n+1})\|^2}\right],
				\end{equation*}
				where $B=\left(\frac{1}{4p}+|q|\right)^2\frac{1}{1+\sigma^2}$.
				It follows from \eqref{Fareq39} that for all sufficiently large $n$, we have
				\begin{equation}\label{Fareq44}
					\frac{\la \nabla f(x_{n+1}), d_{n+1}\ra^2}{\|d_{n+1}\|^2}+\frac{\la \nabla f(x_n),d_n\ra^2}{\|d_n\|^2}  \geq B\frac{\|\nabla f(x_{n+1})\|^4}{\|d_{n+1}\|^2}.
				\end{equation}
				Using  \eqref{Fareq44} and  Lemma \ref{FarLem1}, we get  that
				$\sum_{n=1}^\infty\frac{\|\nabla f(x_{n+1})\|^4}{\|d_{n+1}\|^2}<\infty.$ 
			\end{proof}

			\begin{theorem}\rm 
				Let $f:\mathbb{R}^n \to \mathbb{R}$ be  a real function and let  $\{x_n\}$ be the  sequence generated by Algorithm \ref{MDDLSCG} such that Assumptions (A1) and (A2) hold. Then 
				$
				\liminf_{n \to \infty} \|\nabla f(x_n)\|=0.
				$
			\end{theorem}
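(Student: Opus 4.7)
The plan is to argue by contradiction. Suppose $\liminf_{n\to\infty}\|\nabla f(x_n)\|\neq 0$, so that there is some $\epsilon>0$ with $\|\nabla f(x_n)\|\geq \epsilon$ for all $n\in\mathbb{N}_0$. Since the choice \eqref{AFareq261a} gives $\theta_n>\frac{1}{4p}+|q|$, Lemma~\ref{FarLem2} applies and yields
\[
\sum_{n=0}^\infty \frac{\|\nabla f(x_n)\|^4}{\|d_n\|^2}<\infty.
\]
Because $\|\nabla f(x_n)\|\geq \epsilon$, this reduces to $\sum_{n=0}^\infty \|d_n\|^{-2}<\infty$. The whole contradiction will be obtained by showing instead that $\{\|d_n\|\}$ is bounded, so that the series diverges.

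The central step, and the only quantitatively delicate one, is a sharp upper bound on $|\beta_n^{MDDL}|$. First I collect the uniform estimates: (a) $\|\nabla f(x)\|\leq \gamma$ on $C_0$ by \eqref{Fareq34}; (b) $\|y_n\|\leq L\|s_n\|$ by Lipschitz continuity of $\nabla f$, hence $\|z_n\|\leq (L+\nu\gamma^r)\|s_n\|$ by \eqref{Fareq31}; (c) the denominator admits the two lower bounds
\[
\langle d_n,z_n\rangle=\langle d_n,y_n\rangle+\nu\|\nabla f(x_n)\|^r \alpha_n\|d_n\|^2\geq \nu\epsilon^r\alpha_n\|d_n\|^2,
\]
where the first inequality comes from \eqref{Fareq30}, and similarly $\langle s_n,z_n\rangle\geq \nu\epsilon^r\|s_n\|^2$; (d) combining (b) and (c) shows that $t_n^{p,q}$ in \eqref{t_MDDL} is uniformly bounded by a constant $T$ depending only on $p,q,L,\nu,\gamma,\epsilon,r$. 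Using $\|s_n\|=\alpha_n\|d_n\|$ in the numerators, the two terms of $\beta_n^{MDDL}$ in \eqref{CG_MDDL} are each bounded by
\[
\frac{\gamma(L+\nu\gamma^r)\alpha_n\|d_n\|}{\nu\epsilon^r\alpha_n\|d_n\|^2}\quad\text{and}\quad \frac{T\gamma\alpha_n\|d_n\|}{\nu\epsilon^r\alpha_n\|d_n\|^2},
\]
so that $|\beta_n^{MDDL}|\leq C^{\ast}/\|d_n\|$ for a constant $C^{\ast}$ independent of $n$. The essential cancellation of $\alpha_n$ — made possible by the extra positive $\nu\|\nabla f(x_n)\|^r$ term introduced in the modified secant pair $z_n$ — is the technical heart of the argument; without the modified secant condition one only obtains the weaker bound $|\beta_n^{MDDL}|\leq C$, which does not suffice.

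With $|\beta_n^{MDDL}|\|d_n\|\leq C^{\ast}$ in hand, I feed this into the recursion \eqref{Spect_direc} together with $\theta_{n+1}\leq \tau$ (from \eqref{AFareq261a}) and the triangle inequality,
\[
\|d_{n+1}\|\leq \theta_{n+1}\|\nabla f(x_{n+1})\|+|\beta_n^{MDDL}|\|d_n\|\leq \tau\gamma+C^{\ast},
\]
to conclude that $\|d_n\|\leq D:=\max\{\|d_0\|,\tau\gamma+C^{\ast}\}$ for every $n$. Therefore
\[
\sum_{n=0}^\infty \frac{\|\nabla f(x_n)\|^4}{\|d_n\|^2}\geq \sum_{n=0}^\infty \frac{\epsilon^4}{D^2}=\infty,
\]
contradicting the convergent series obtained above from Lemma~\ref{FarLem2}. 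Hence $\liminf_{n\to\infty}\|\nabla f(x_n)\|=0$.
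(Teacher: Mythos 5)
Your proposal is correct and follows essentially the same route as the paper's proof: contradiction via a uniform lower bound on $\|\nabla f(x_n)\|$, the estimates $\langle d_n,z_n\rangle\geq\nu\epsilon^r\alpha_n\|d_n\|^2$ and $\|z_n\|\leq(L+\nu\gamma^r)\|s_n\|$ to get $|\beta_n^{MDDL}|\leq C^*/\|d_n\|$, boundedness of $\{\|d_n\|\}$, and the resulting clash with Lemma~\ref{FarLem2}. The only cosmetic difference is that you bound $t_n^{p,q}$ uniformly by a constant $T$ before substituting, while the paper expands $t_n^{p,q}$ and estimates the $p$- and $q$-terms separately.
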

			\begin{proof}
				Suppose, for the sake of contradiction, that the assertion of the theorem does not hold. Then there exists a positive constant $\xi$ such that
				\begin{equation}\label{Fareq47}
					\|\nabla f(x_n)\|>\xi \text{ for all } n \in \mathbb{N}_0.
				\end{equation}
				Employing \eqref{Fareq30}, \eqref{Fareq31} and \eqref{Fareq47}, we obtain
				\begin{align}
					\la d_n,z_n\ra&=\la d_n,y_n\ra+\nu\|\nabla f(x_n)\|^r\alpha_n\|d_n\|^2\nonumber\\
					&\geq \nu\|\nabla f(x_n)\|^r\alpha_n\|d_n\|^2\geq \nu\xi^r\alpha_n \|d_n\|^2. \label{Fareq48}
				\end{align}
				Using  Assumption (A2), \eqref{Fareq31} and \eqref{Fareq34}, we have
				\begin{equation}\label{Fareq49}
					\|z_n\|\leq \|y_n\|+\nu\|\nabla f(x_n)\|^r\|s_n\|\leq L\|s_n\|+\nu \gamma^r\|s_n\|=(L+\nu\gamma^r)\|s_n\|.
				\end{equation}
				Thus, relations \eqref{CG_MDDL}, \eqref{Fareq34}, \eqref{Fareq48}, \eqref{Fareq49}, along with the Cauchy-Schwarz inequality, yield
				\allowdisplaybreaks
				\begin{align}
					&	\|\beta_n^{MDDL}\|\nonumber\\
					&=\left\|\frac{\la \nabla f(x_{n+1}),z_n\ra}{\la d_n,z_n\ra}-t_n^{p,q}\frac{\la \nabla f(x_{n+1}),s_n\ra}{\la d_n,z_n\ra}\right\|\nonumber\\
					&=\left\|\frac{\la \nabla f(x_{n+1}),z_n\ra}{\la d_n,z_n\ra}-\left(p\frac{\|z_n\|^2}{\la s_n,z_n\ra}-q\frac{\la s_n,z_n\ra}{\|s_n\|^2}\right)\frac{\la \nabla f(x_{n+1}),s_n\ra}{\la d_n,z_n\ra}\right\|\nonumber\\
					&=\left\|\frac{\la \nabla f(x_{n+1}),z_n\ra}{\la d_n,z_n\ra}-p\frac{\|z_n\|^2}{\la s_n,z_n\ra}\frac{\la \nabla f(x_{n+1}),s_n\ra}{\la d_n,z_n\ra}+q\frac{\la s_n,z_n\ra}{\|s_n\|^2}\frac{\la \nabla f(x_{n+1}),s_n\ra}{\la d_n,z_n\ra}\right\|\nonumber\\
					&=\left\|\frac{\la \nabla f(x_{n+1}),z_n\ra}{\la d_n,z_n\ra}\right\|+p\left\|\frac{\|z_n\|^2}{\la s_n,z_n\ra}\frac{\la \nabla f(x_{n+1}),s_n\ra}{\la d_n,z_n\ra}\right\| \nonumber\\
					& \quad +|q|\left\|\frac{\la s_n,z_n\ra}{\|s_n\|^2}\frac{\la \nabla f(x_{n+1}),s_n\ra}{\la d_n,z_n\ra}\right\|\nonumber\\
					&=\frac{\|\nabla f(x_{n+1})\|\|z_n\|}{\la d_n,z_n\ra}+p\frac{\|z_n\|^2\|\nabla f(x_{n+1})\|\|d_n\|}{\la d_n,z_n\ra^2}+|q|\frac{\|\nabla f(x_{n+1})\|}{\|d_n\|} \label{Fareq50} \\ 
					&\leq \frac{\gamma(L+\nu\gamma^r)}{\nu\xi^r\|d_n\|}+\frac{p\gamma(L+\nu\gamma^r)^2}{\nu^2\xi^{2r}\|d_n\|}+|q|\frac{\gamma}{\|d_n\|}.\nonumber
				\end{align}
				Note that  $\theta_{n+1}\leq \tau$ by \eqref{AFareq261a}. Hence, using \eqref{Spect_direc} and \eqref{Fareq50}, we arrive at
				\begin{align}
					\|d_{n+1}\|&=\|-\theta_{n+1}\nabla f(x_{n+1})+\beta_n^{MDDL}d_n\| \nonumber\\
					& \leq \|\theta_{n+1}\|\|\nabla f(x_{n+1})\|+\|\beta_n^{MDDL}\|\|d_n\| \nonumber\\
					& \leq \tau \gamma + \frac{\gamma(L+\nu\gamma^r)}{\nu\xi^r}+\frac{p\gamma(L+\nu\gamma^r)^2}{\nu^2\xi^{2r}}+|q|\gamma. \label{Fareq51}
				\end{align}
				Using \eqref{Fareq51}, we observe that 
				$$\frac{1}{\|d_{n+1}\|^2}\geq \frac{1}{E^2}\textrm{ for all }n\in \mathbb{N}_0,$$
				where $E=\tau \gamma + \frac{\gamma(L+\nu\gamma^r)}{\nu\xi^r}+\frac{p\gamma(L+\nu\gamma^r)^2}{\nu^2\xi^{2r}}+|q|\gamma$. Thus, 
				\begin{equation}\label{Fareq45}
					\sum_{n=0}^{\infty} \frac{1}{\|d_n\|^2}=\infty.
				\end{equation}
				Employing \eqref{Fareq47} and  Lemma \ref{FarLem2}, we find that
				$$\sum_{n=0}^\infty \frac{1}{\|d_n\|^2}=\sum_{n=0}^\infty \frac{\|\nabla f(x_n)\|^4}{\|d_n\|^2 \|\nabla f(x_n)\|^4}\leq \frac{1}{\xi^4}\sum_{n=0}^\infty \frac{\|\nabla f(x_n)\|^4}{\|d_n\|^2}<\infty,$$
				which contradicts \eqref{Fareq45}. Therefore, $\liminf_{n \to \infty}\|\nabla f(x_n)\|=0$, as asserted.
			\end{proof}

			\section{Numerical Experiments}
			
			In this section we present the numerical performance of our proposed method (MDDLSCG), comparing it with Algorithm 3.1 in \cite{Faramarzi2019} (denoted by MSCG) and the scaled conjugate gradient method (denoted by ScCG) in \cite{Mrad2024}. All numerical experiments were conducted in MATLAB R2024b on an HP workstation with Intel Core i7-13700 processor, 32 GB RAM and Windows 11.
			

			For the numerical experiments, we consider two test problems: (i) finding the minima of the Beale function, and (ii)  finding the minima of the Almost Perturbed Quadratic (APQ) function.
			\\   \\
			\textbf{Beale Test Function:} The Beale function is multimodal with sharp peaks at the corners of the input domain, which is defined from the general Beale function as follows:
			
			\begin{footnotesize}
				\[
				f(x, y) = \left( 1.5 - x + xy \right)^2 + \left( 2.25 - x + xy^2 \right)^2 + \left( 2.625 - x + xy^3 \right)^2 \text{ for all } (x,y) \in \mathbb{R}^2.
				\]
			\end{footnotesize}
			The global minimum is located at \( (x, y) = (3, 0.5) \), where \( f(x, y) = 0 \).
			
			\begin{figure}[htpb]
				\centering
				\subfigure[Surface plot of the Beale function]{
					\includegraphics[width=60mm]{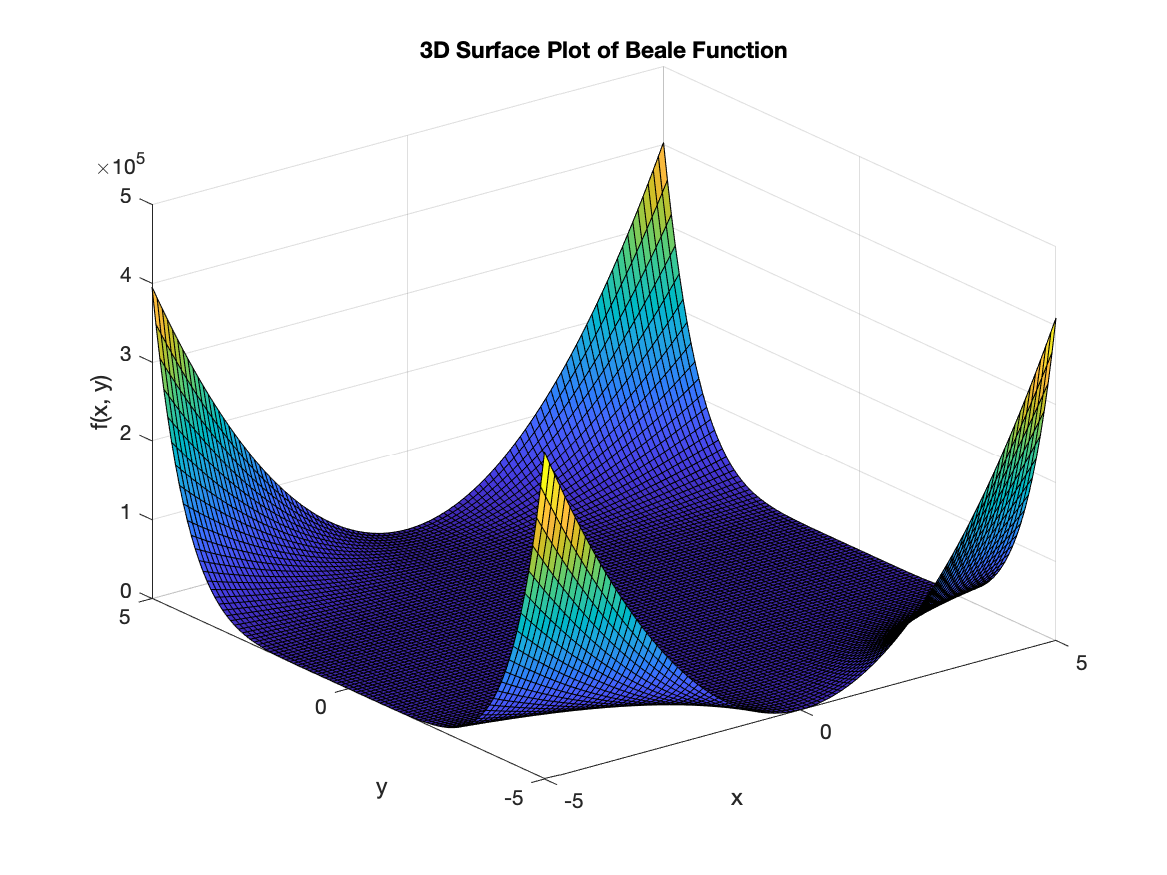}
				}%
				\subfigure[Contour plot of the Beale function]{
					\includegraphics[width=60mm]{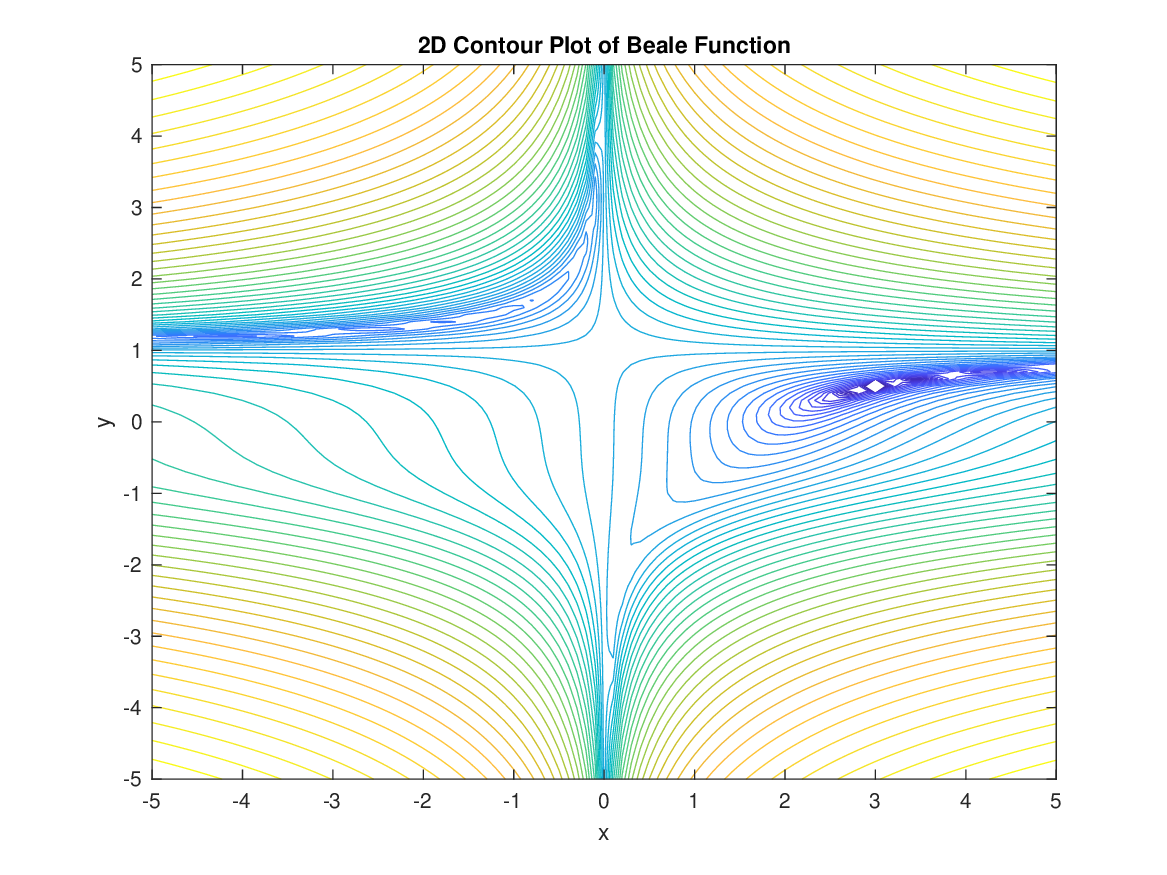}
				}
				\caption{Surface and Contour plot of  the Beale functions}
				\label{beale}
			\end{figure}

			For the numerical tests, we choose $\delta= 0.01$, $\sigma \in  \{0.1,0.2,0.4,0.6,0.8,0.9\}$ for all the algorithms, $\eta = 0.001$, $\tau = 10$, $r=1$ and $\nu=0.001$ for algorithms MSCG and MDDLSCG and $p=0.4$ and $q=0.2$ for the algorithm MDDLSCG. We take the initial point $x_0=(1,0.8)$.
			The iterations are terminated once they satisfy the stopping criterion $E_n=\|\nabla f(x_n)\|_\infty<10^{-15}$.
			
			The numerical results of all the algorithms MDDLSCG, MSCG and ScCG for the Beale function are summarized in Table \ref{Tab}, while their performance is illustrated in Figure \ref{bealefig1} and \ref{bealefig2}. In Figures \ref{bealefig1} (a), (c), (e), the iteration paths are visualized on the contour plot of the Beale function, showing how each method converges to the minimizer for $\sigma =0.1, 0.2, 0.6$, respectively. Figures \ref{bealefig1} (b), (d), (f) depict the convergence rate by plotting the norm of the gradient, $\|\nabla f(x_n)\|_\infty$ versus the number of iterations on a logarithmic scale for $\sigma =0.1, 0.2, 0.6$, respectively.  The comparison graph between different values of sigma and the number of iterations is depicted in  Figure \ref{bealefig2} (a) and the comparison graph between different values of sigma and execution time is in  Figure \ref{bealefig2} (b).
			

			In view of the data presented, it is evident that our proposed method, MDDLSCG, outperforms both MSCG and ScCG in all tested scenarios. As shown in Table \ref{Tab}, Figure \ref{bealefig1} and \ref{bealefig2} (b), MDDLSCG reaches the stopping criteria in fewer iterations and requires less execution time.  Additionally, using Figure \ref{bealefig2} (a), we observe that these methods are significantly sensitive to variations in the parameter \( \sigma \), which affects their performance.

			\begin{table}[h]
				\centering
				\begin{adjustbox}{width=1\textwidth}
					\begin{tabular}{|c|ccc|ccc|ccc|}
						\hline
						$\sigma$& & MDDLSCG&&&MSCG &&& ScCG &\\
						&  itr& Tcpu& $E_n$ &  itr& Tcpu& $E_n$ &  itr& Tcpu& $E_n$\\
						\hline 
						0.1  &37 &1.86e-02 &9.49e-15&63 &3.14e-02 &2.63e-16&200 & 5.40e-02 &5.27e-16\\
						0.2  &36 &1.75e-02 &3.16e-15 &63&2.76e-02 &9.07e-15&136 &3.99e-02 &4.51e-15\\
						
						0.4  &53 &2.44e-02 &9.40e-15&73 &2.65e-02 &8.75e-15 &126 &4.73e-02 &2.87e-15\\
						
						0.6 &40 &2.36e-02 &7.91e-16 &72 &2.78e-02 &5.74e-15 &95 &4.94e-02 &9.32e-15\\
						
						0.8 &67 &3.24e-02&5.32e-15 &111 &2.86e-02 &4.48e-15&201 &4.92e-02&6.91e-15\\
						
						0.9 &64 &3.28e-02 &8.16e-15 &101 &3.48e-02 &8.74e-15 &184 &4.37e-02&1.08e-15\\
						\hline
					\end{tabular}
				\end{adjustbox}
				\caption{Comparison table for MDDLSCG,  MSCG and ScCG for Beale function}
				\label{Tab}
			\end{table}

			\begin{figure}[htbp]
				\centering
				\subfigure[Behavior of iterations paths for $\sigma =0.1$]{
					\includegraphics[width=65mm]{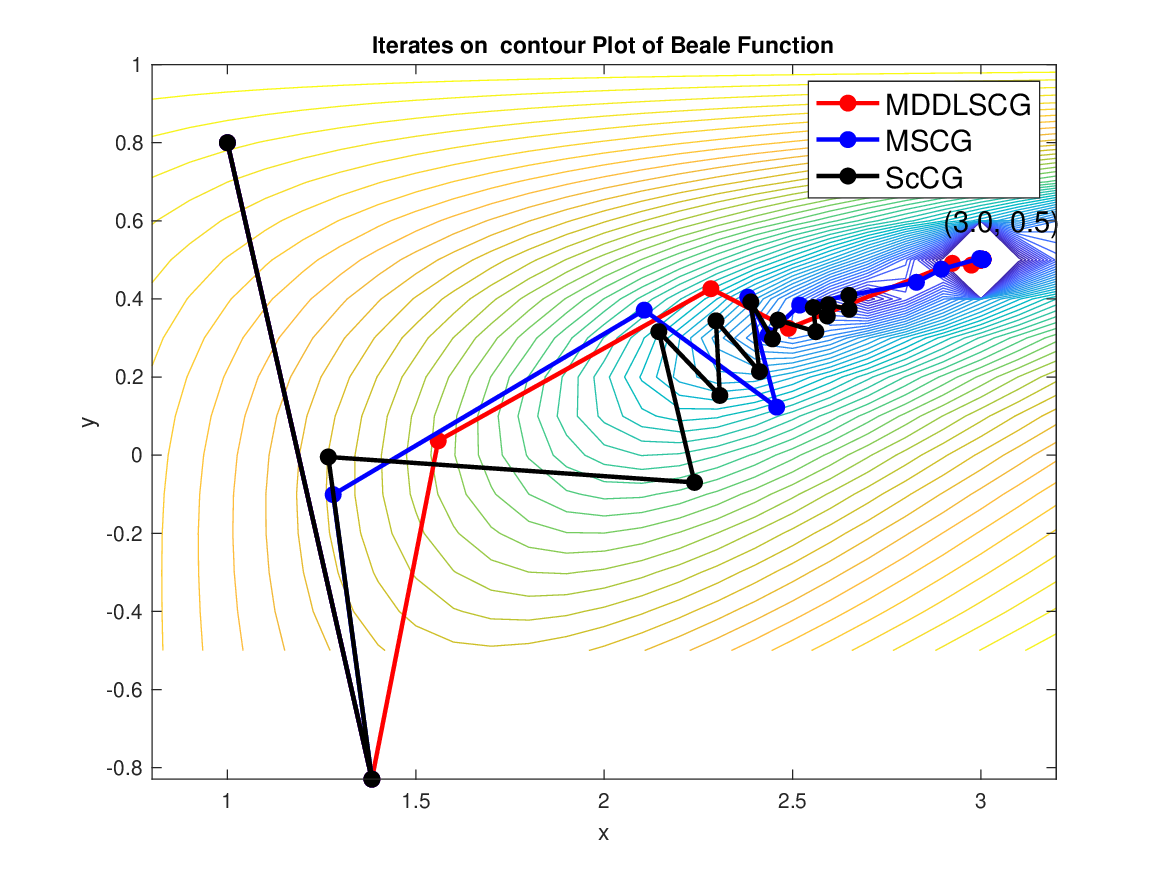}
					z		}%
				\subfigure[Behavior of $\|\nabla f(x_n)\|_\infty$ for $\sigma =0.1$]{
					\includegraphics[width=65mm]{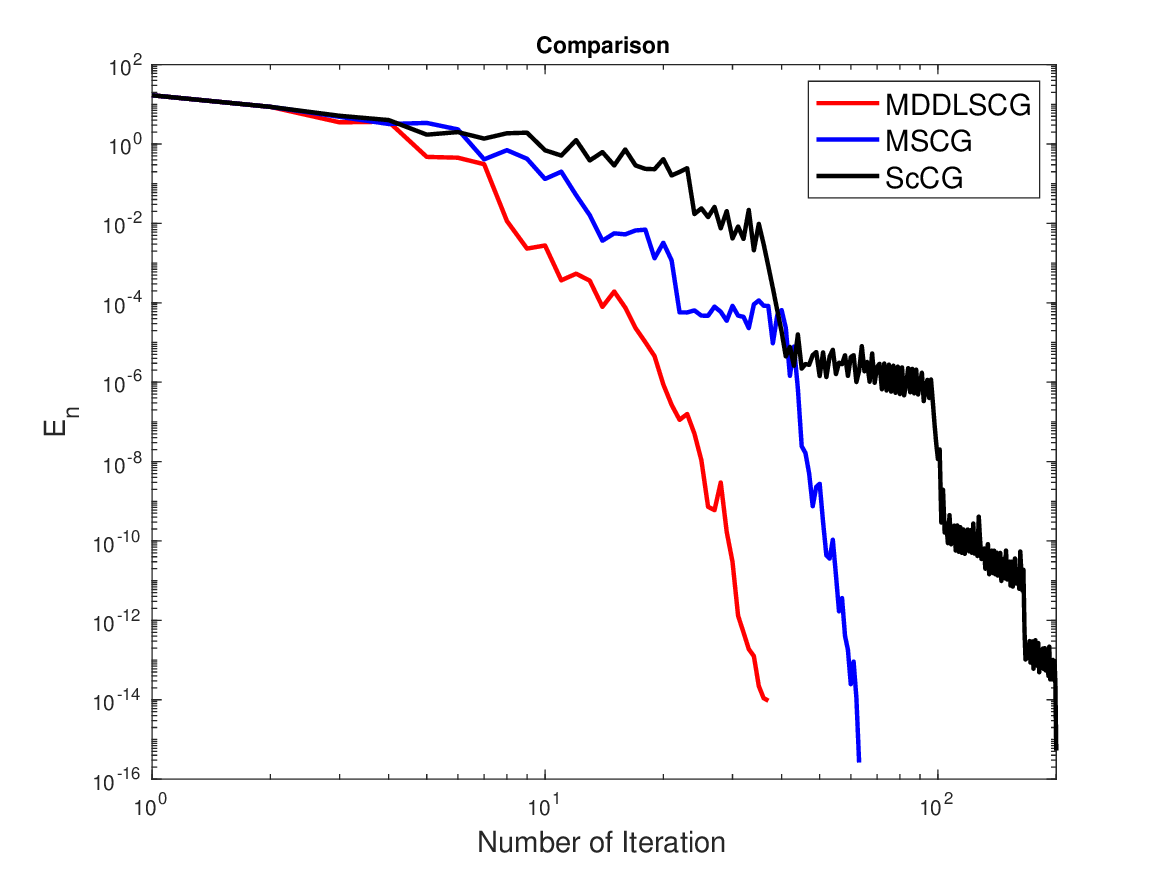}
				}
				\subfigure[Behavior of iterations paths for $\sigma =0.2$]{
					\includegraphics[width=65mm]{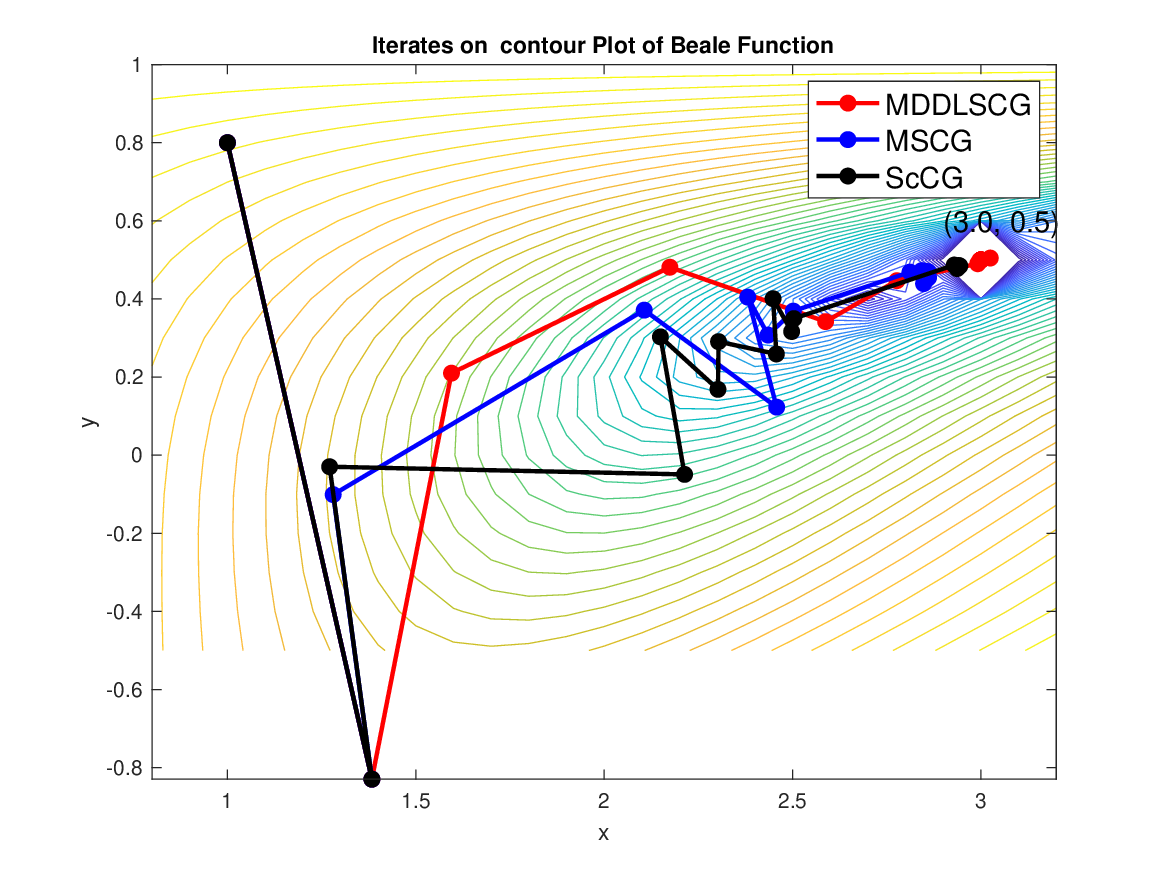}
				}%
				\subfigure[Behavior of $\|\nabla f(x_n)\|_\infty$ for $\sigma =0.2$]{
					\includegraphics[width=65mm]{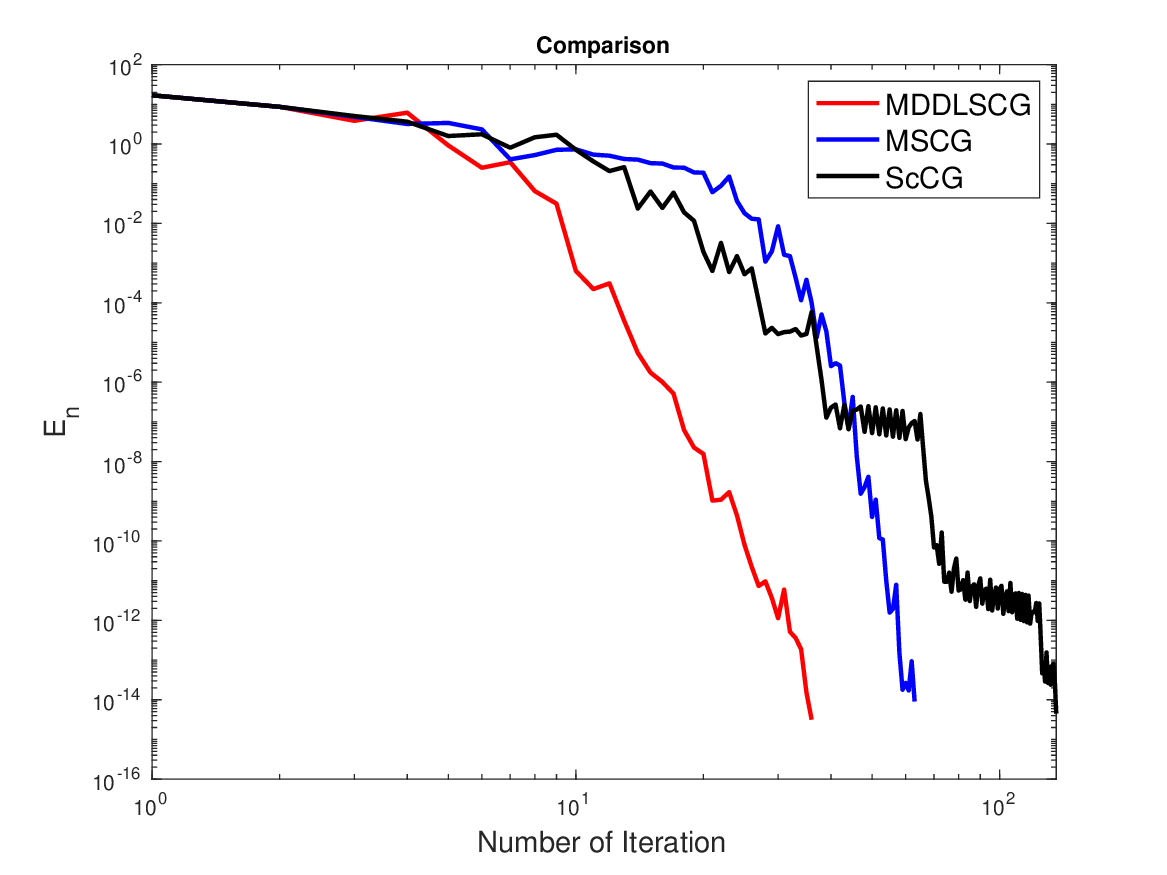}
				}
				\subfigure[Behavior of iterations paths for $\sigma =0.6$]{
					\includegraphics[width=65mm]{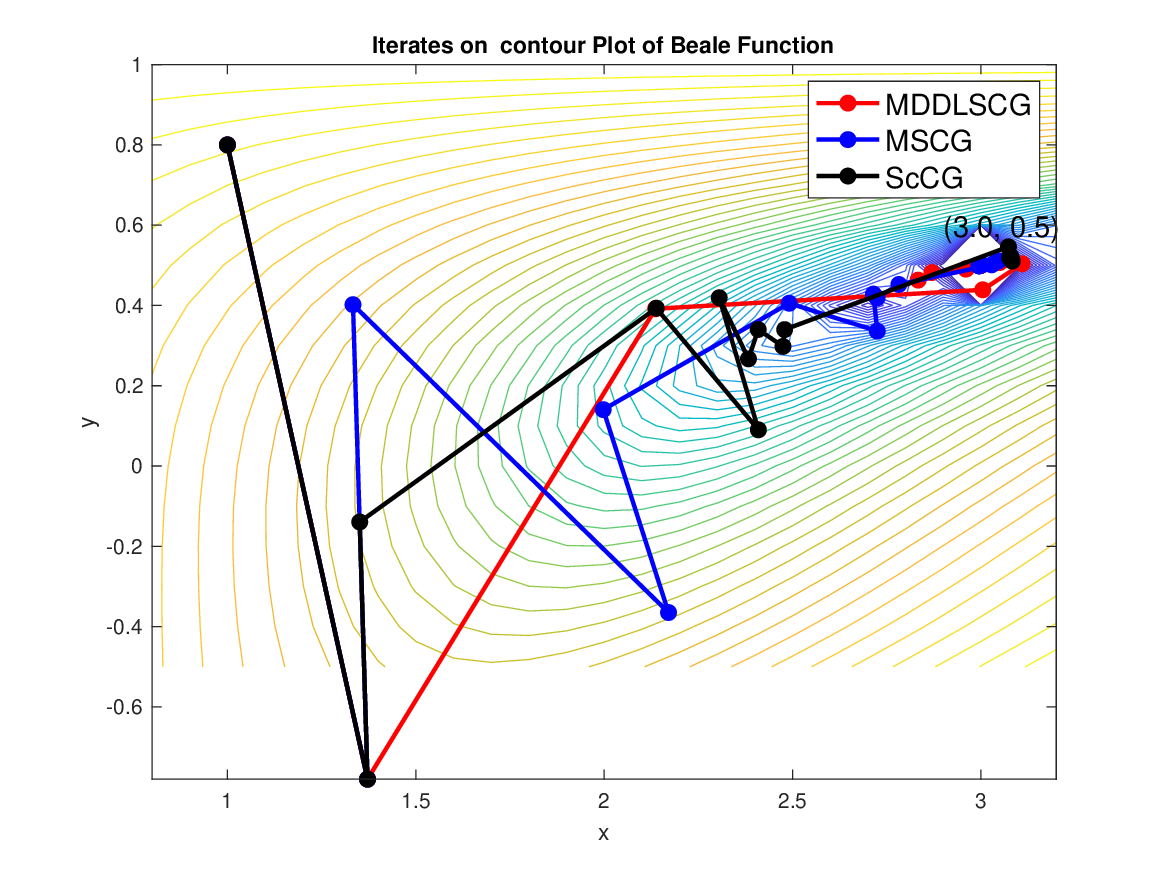}
				}%
				\subfigure[Behavior of $\|\nabla f(x_n)\|_\infty$ for $\sigma =0.6$]{
					\includegraphics[width=65mm]{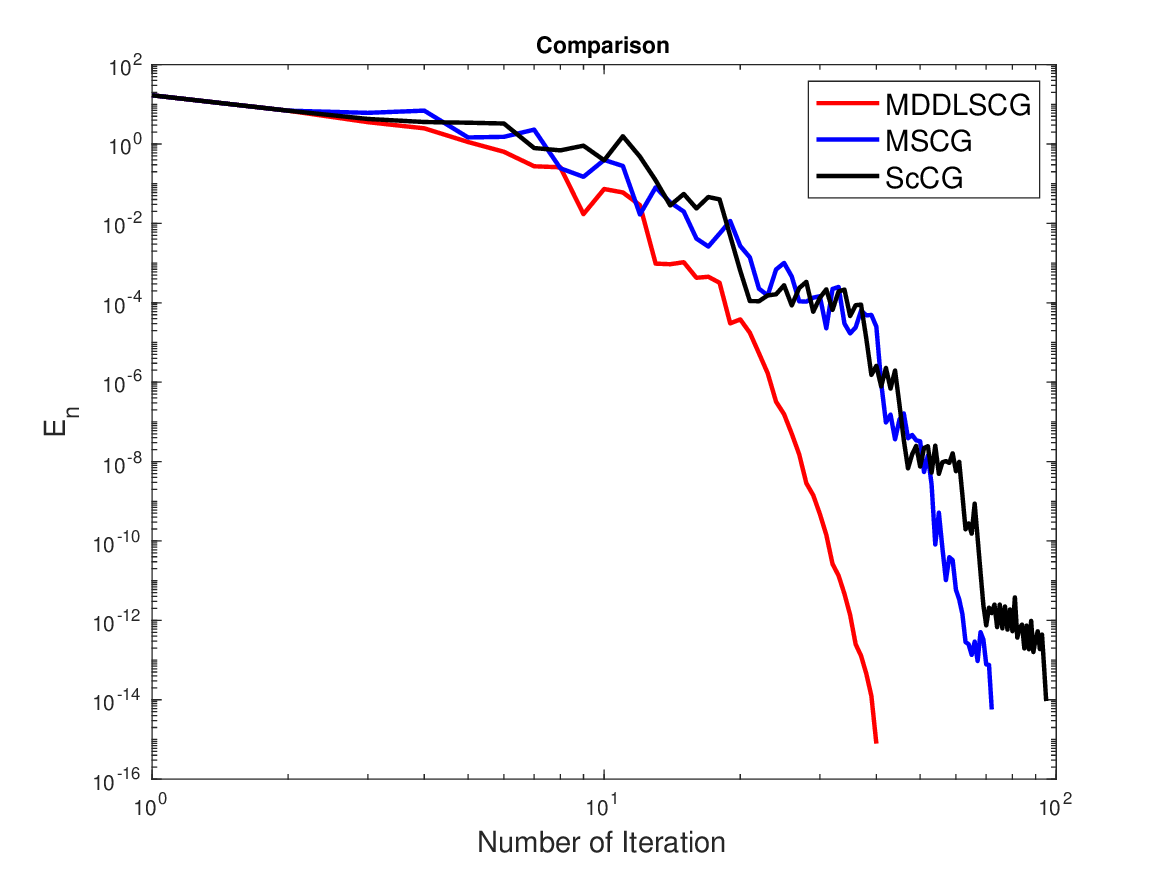}
				}
				\caption{Comparison of the iterations and execution time for different $\sigma$ for MDDLSCG, MSCG, ScCG for the Beale function }
				\label{bealefig1}
			\end{figure}

				%
				%
				%
				%
			
			\begin{figure}[htbp]
				\centering
				\subfigure[Behavior of iterations for  $\sigma$]{
					\includegraphics[width=65mm]{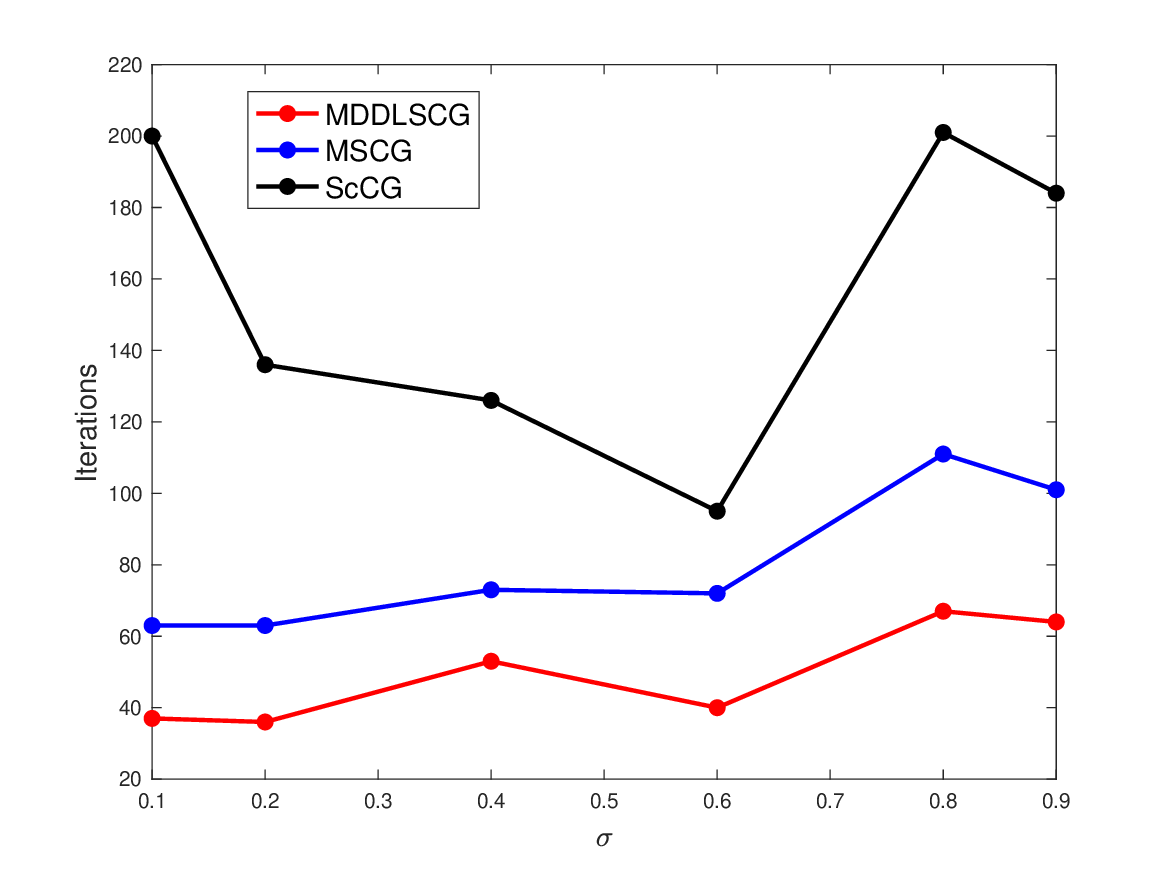}
				}%
				\subfigure[Behavior of execution time for  $\sigma$]{
					\includegraphics[width=65mm]{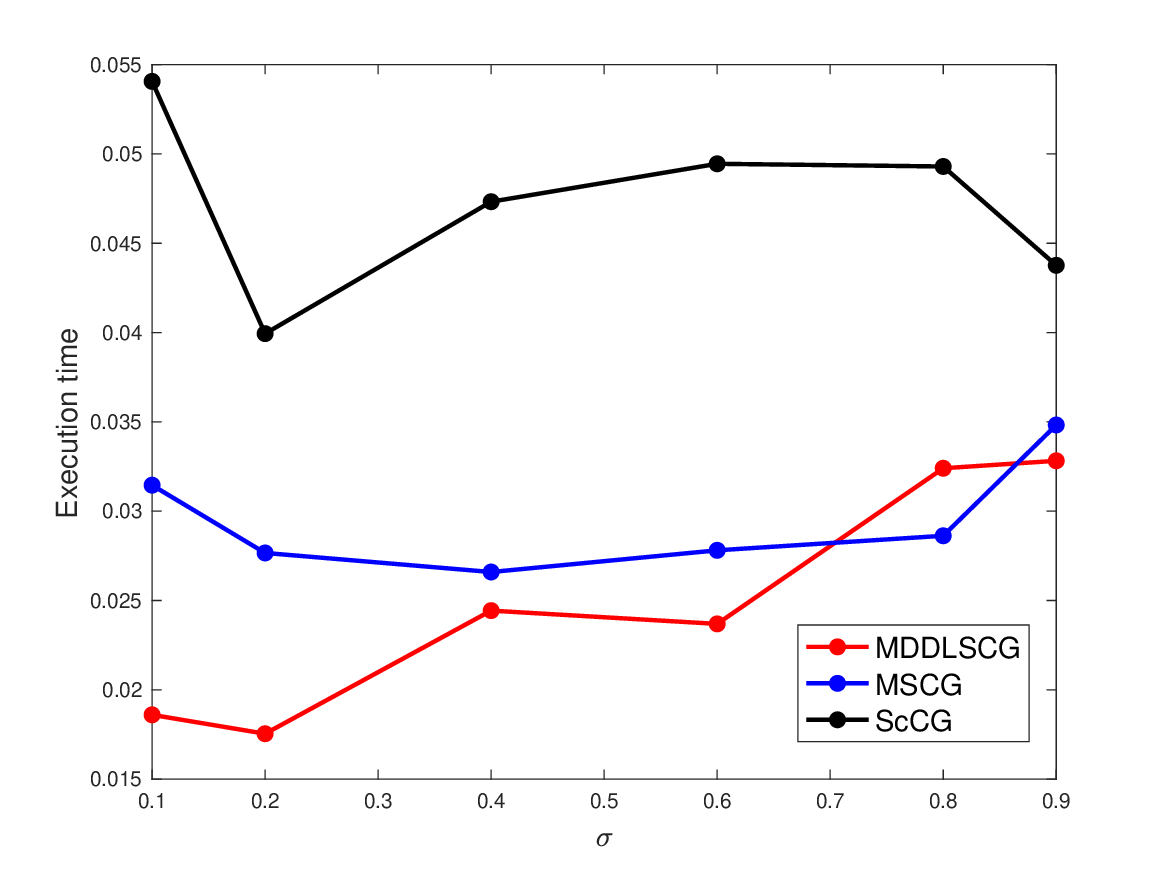}
				}
				\caption{Comparison of the iterations and execution time for different $\sigma$ for MDDLSCG, MSCG, ScCG for the Beale function }
				\label{bealefig2}
			\end{figure}  
			\ \\
			\textbf{Almost Perturbed Quadratic function:}	Consider the Almost Perturbed Quadratic (APQ) function $f:\mathbb{R}^m \to \mathbb{R}$ defined by 
			\begin{equation*}
				f(x) = \frac{1}{2} x^T A x + b^T x \text{ for all } x\in \mathbb{R}^m,
			\end{equation*}
			where $A\in \mathbb{R}^{m\times m}$ is a symmetric positive definite matrix ans $b\in \mathbb{R}^m$ is a vector of perturbations. This function has a global minimum at $x=-A^{-1}b$. Note that the gradient of $f$ is given by 
			\begin{equation*}
				\nabla f(x)=Ax+b \text{ for all } x\in \mathbb{R}^m.
			\end{equation*}

			For the numerical experiments, we have considered three cases, each characterized by dimensions of $1000$, $100$ and $25$. In the cases with dimensions $1000$ and $100$, the matrix \( A \) and vector \( b \) are randomly generated using MATLAB's built-in function \texttt{rand} and the initial point is randomly generated using MATLAB's \texttt{rand} function.
			
			In the case of dimension $25$, we take the initial point 
			\begin{align*}
				x_0 &= (x_{0,1},\ldots, x_{0,i}, \ldots,x_{0,25}), \text{ where } x_{0,i} = 1 \text{ for all } i = 1, 2,\ldots, 25 \text{ and} \\
				a&=(2,4,3,6,4,12,8,10,5,8,6,11,4,8,3,6,2,12,6,8,4,10,5,2,6),\\
				b&=(-4,-52,-18,-60,-24,-72,-72,-50,-55,-80,-78,-22,-12,\\
				& \quad -80,-27,-42,-24,-120,-30,-120,-24,-100,-60,-20,-66),\\
				A&=\text{diag}(a).
			\end{align*}
			

			In all cases, the parameters are set as follows: \( \delta = 0.01 \) and \( \sigma = 0.1 \) for all algorithms, while for MSCG and MDDLSCG, we use \( \eta = 0.001 \), \( \tau = 10 \), \( r = 1 \), and \( \nu = 0.001 \). Additionally, for the MDDLSCG algorithm, we set \( p = 0.4 \) and \( q = 0.2 \). 
			
			The iterations are terminated once they satisfy the stopping criterion
			\(
			E_n = \|\nabla f(x_n)\|_\infty < 10^{-6}
			\)
			is satisfied, where $x_n=(x_{n,1},\ldots,x_{n,25})$.
			
			The performance of the algorithms MDDLSCG, MSCG, and ScCG for the APQ function is summarized in Table \ref{Tab2} for all three cases. To provide a clearer visualization of the experiment, we have plotted the graph of the number of iterations versus \( E_n \) in Figure \ref{APQgraph}. 
			From Table \ref{Tab2} and Figure \ref{APQgraph}, it is evident that our proposed method, MDDLSCG, requires fewer iterations and less computational time to satisfy the desired stopping criterion across all cases.
			
			Figure \ref{APQtrack} illustrates how the algorithms MDDLSCG, MSCG, and ScCG approximate the desired solution \( x^* = -A^{-1}b \) at the 5th and 10th iterations in the case where the problem dimension is \( 25 \). 
			
			Furthermore, Figure \ref{APQprogress} depicts the convergence behavior of the algorithms as the number of iterations increases up to the 50th iteration for the same dimension. It is observed that MDDLSCG demonstrates superior convergence characteristics compared to MSCG and ScCG.

			%
			%

			\begin{table}[htbp]
				\centering
				\begin{tabular}{|ccccc|}
					
					\hline
					Dimension & Method & itr& Tcpu& $Er_n$\\
					\hline
					25 & MDDLSCG&52&3.088055e-02 &5.691882e-07\\
					&MSCG&59 &3.947327e-02&6.931169e-07\\
					& ScCG& 65&4.730704e-02  &4.514777e-07\\
					\hline
					100& MDDLSCG&102&  5.552326e-02&8.162825e-07\\
					&MSCG&121 &6.512598e-02 &9.411660e-07\\
					& ScCG&134&6.823947e-02&8.609522e-07\\
					\hline
					1000& MDDLSCG&132& 3.481332e-02  &9.291368e-07\\
					&MSCG&146 &3.522395e-02&9.484382e-07\\
					& ScCG&171&4.053286e-02 &8.747078e-07\\
					\hline
				\end{tabular}
				\\
				\caption{Comparison table of MDDLSCG,  MSCG and ScCG for the APQ
					function }
				\label{Tab2}
			\end{table}

			\begin{figure}[htbp]
				\centering
				\subfigure[Dimension 1000]{
					\includegraphics[width=65mm]{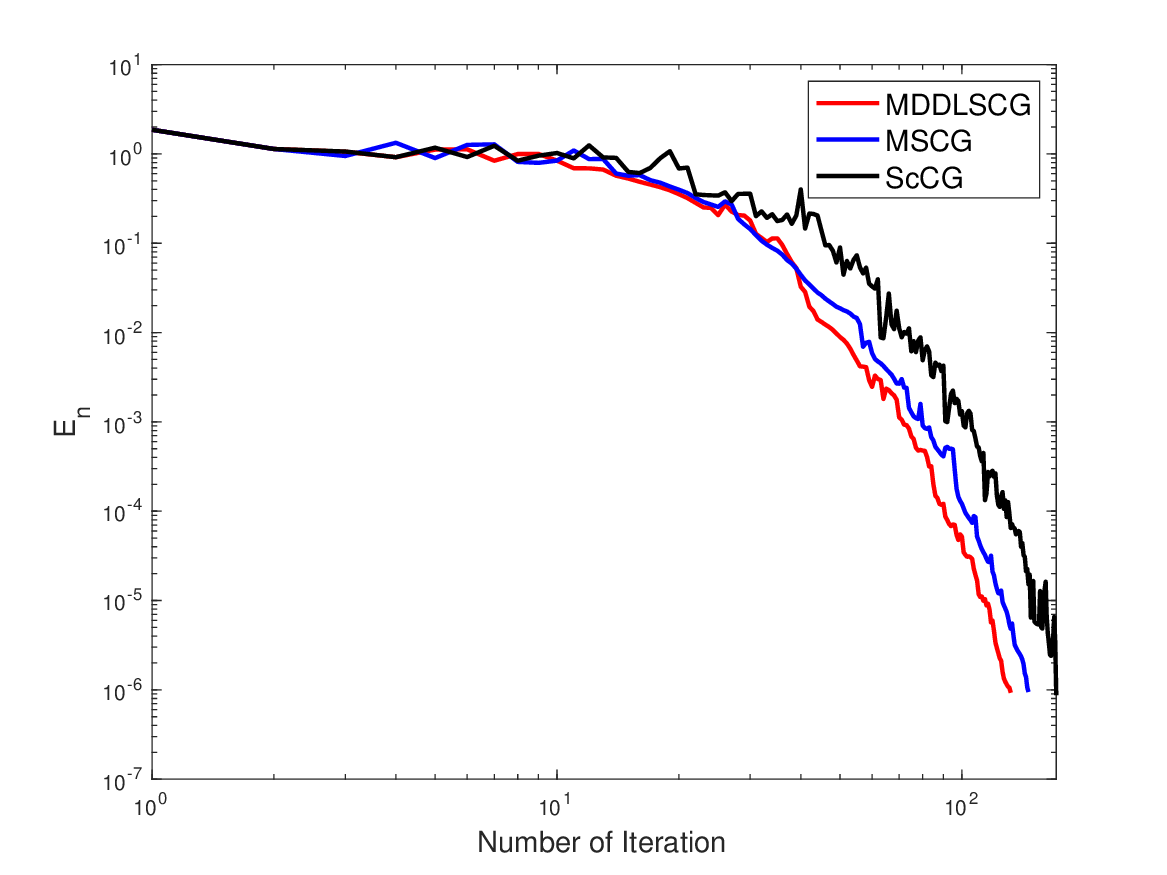}
				}%
				\subfigure[Dimension 100]{
					\includegraphics[width=65mm]{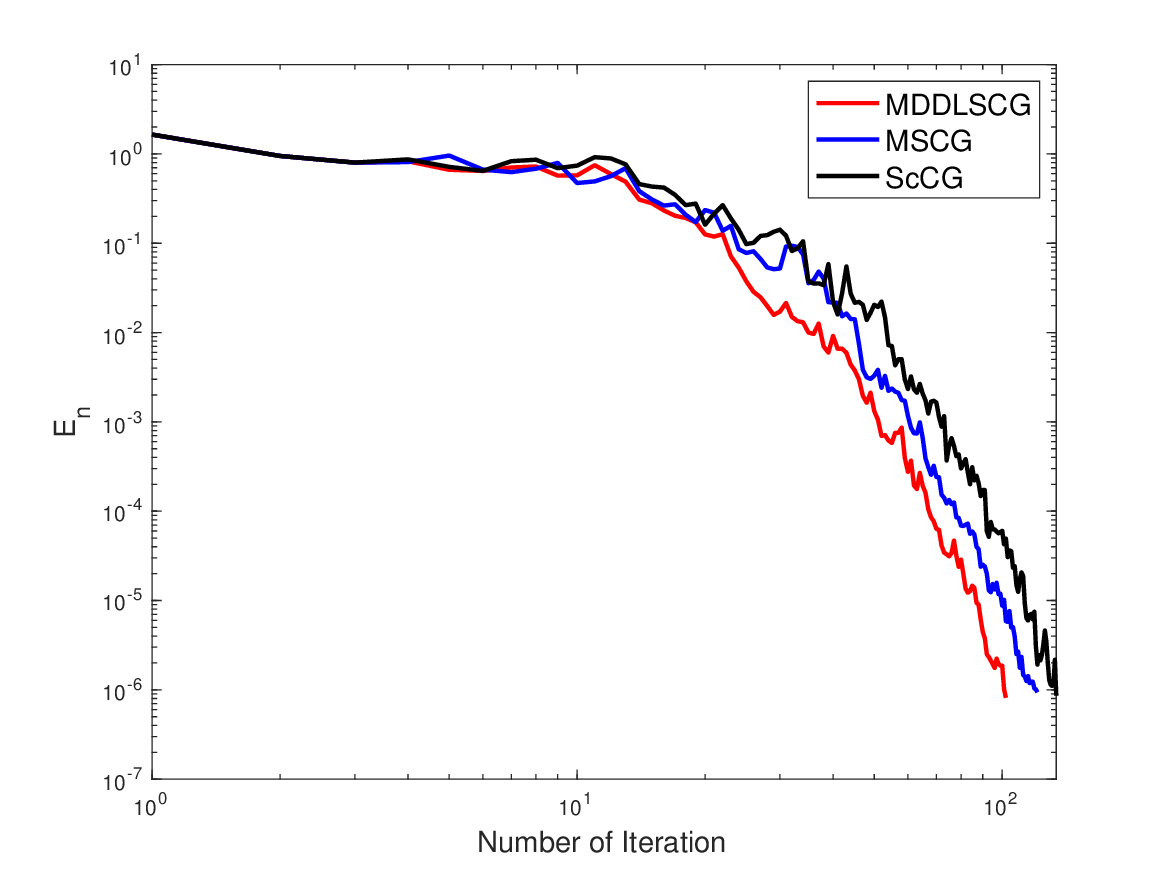}
				}
				\subfigure[Dimension 25]{
					\includegraphics[width=65mm]{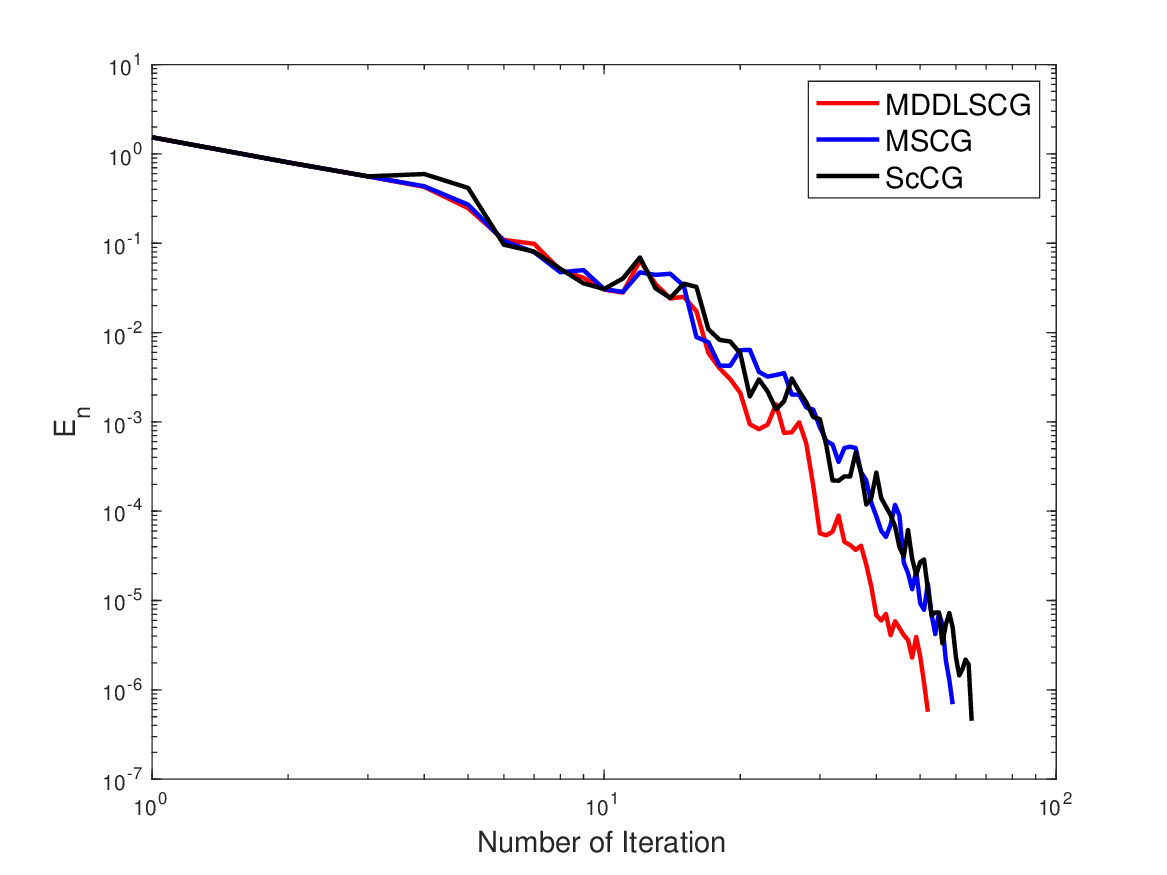}
				}
				\caption{Convergence behavior of $\|\nabla f(x_n)\|_\infty$ for MDDLSCG, MSCG, ScCG for the APQ function}
				\label{APQgraph}
			\end{figure}   
			
			\begin{figure}[htbp]
				\centering
				\subfigure[5th iteration]{
					\includegraphics[width=60mm]{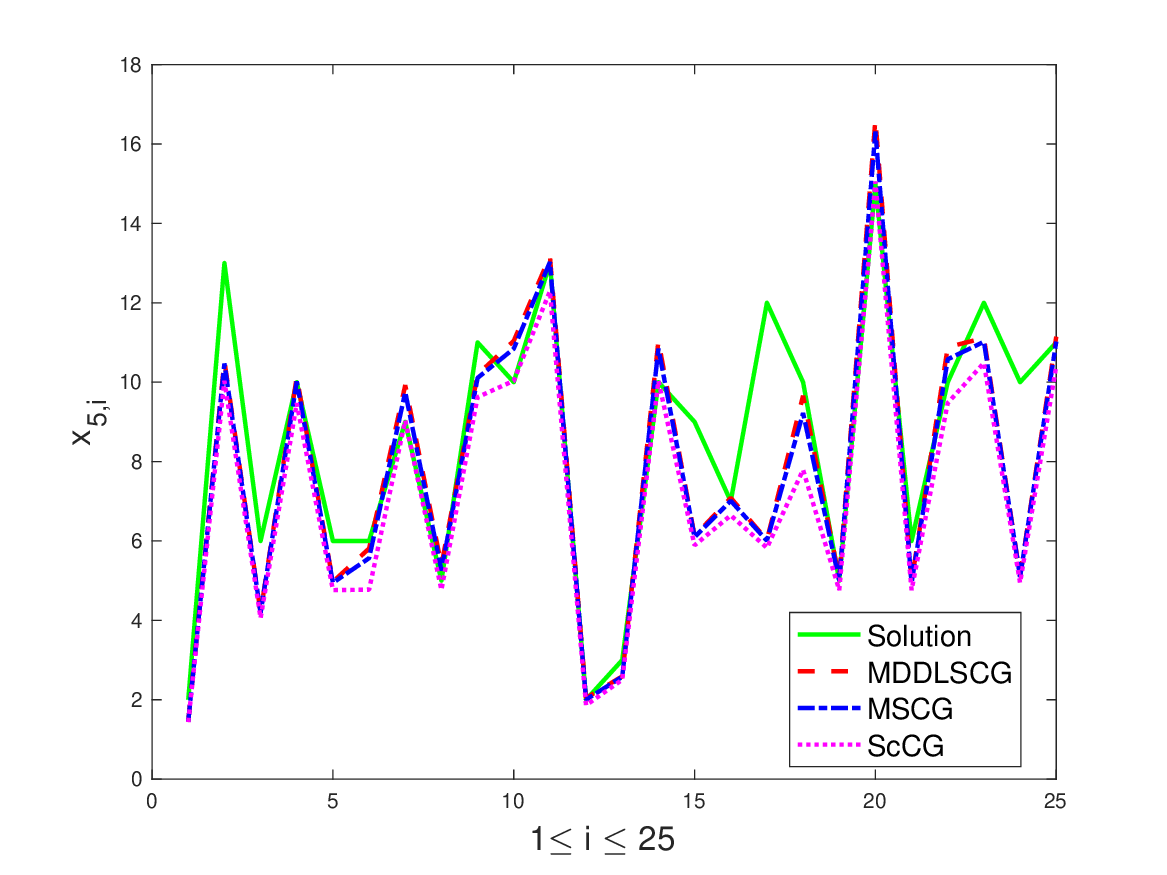}
				}%
				\subfigure[10th iterations]{
					\includegraphics[width=60mm]{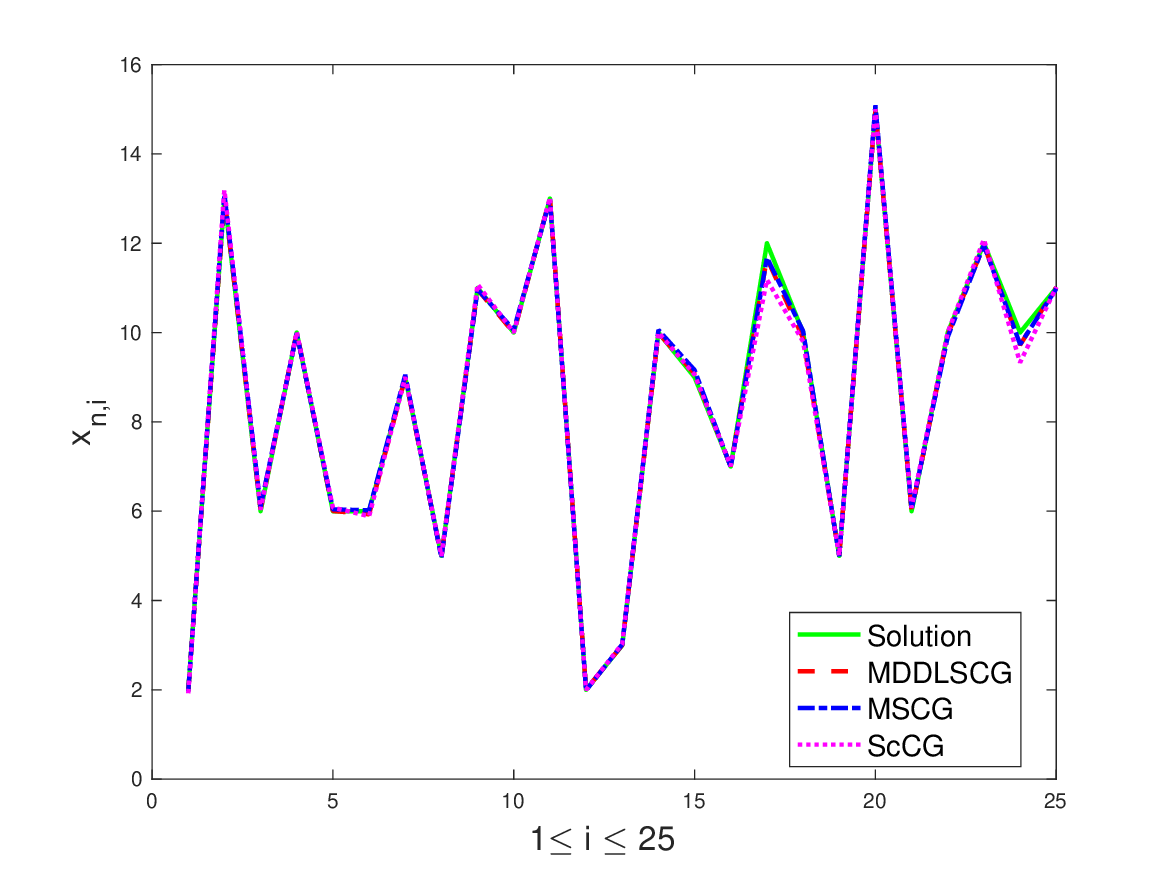}
				}
				\caption{Tracking behaviour of MDDLSCG, MSCG, ScCG for the APQ function at various iterations}
				\label{APQtrack}
			\end{figure}

			\begin{figure}[htbp]
				\centering
				\subfigure[MDDLSCG Method]{
					\includegraphics[width=60mm]{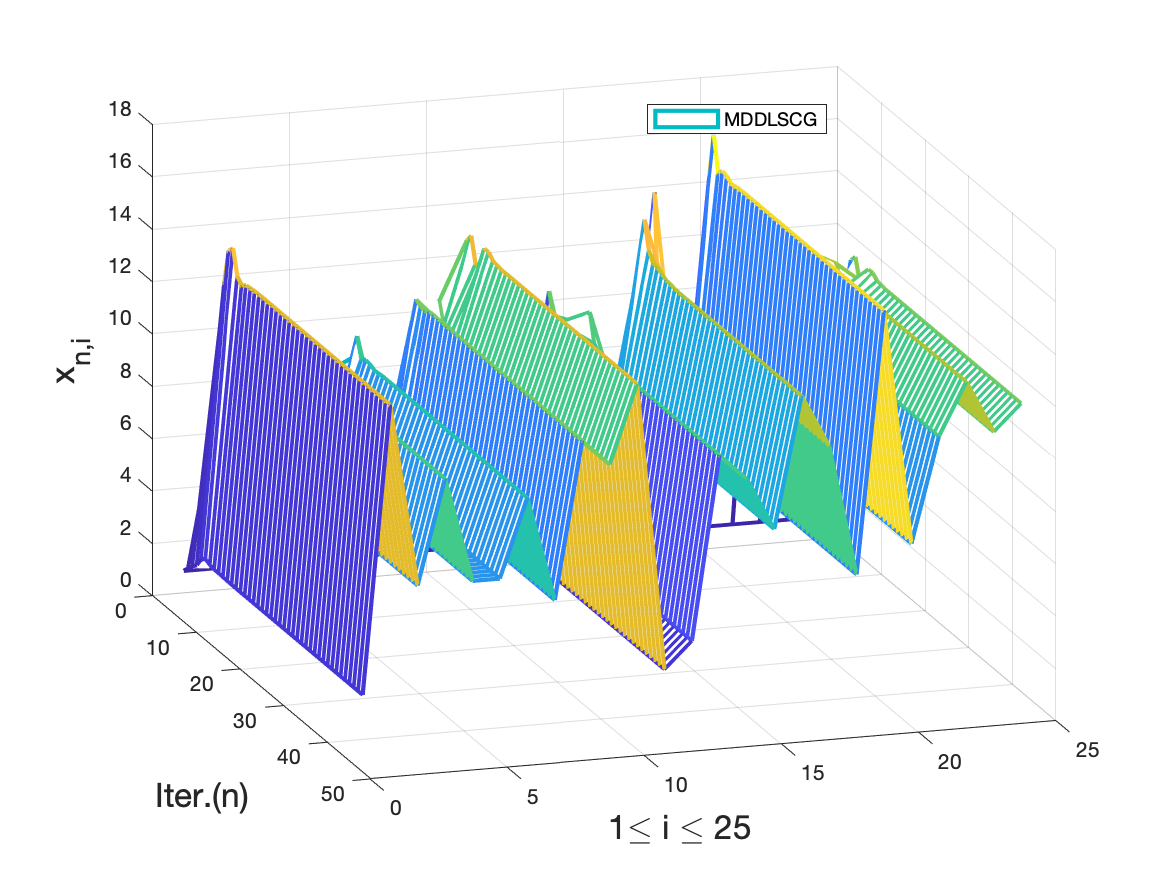}
				}%
				\subfigure[MSCG Method]{
					\includegraphics[width=60mm]{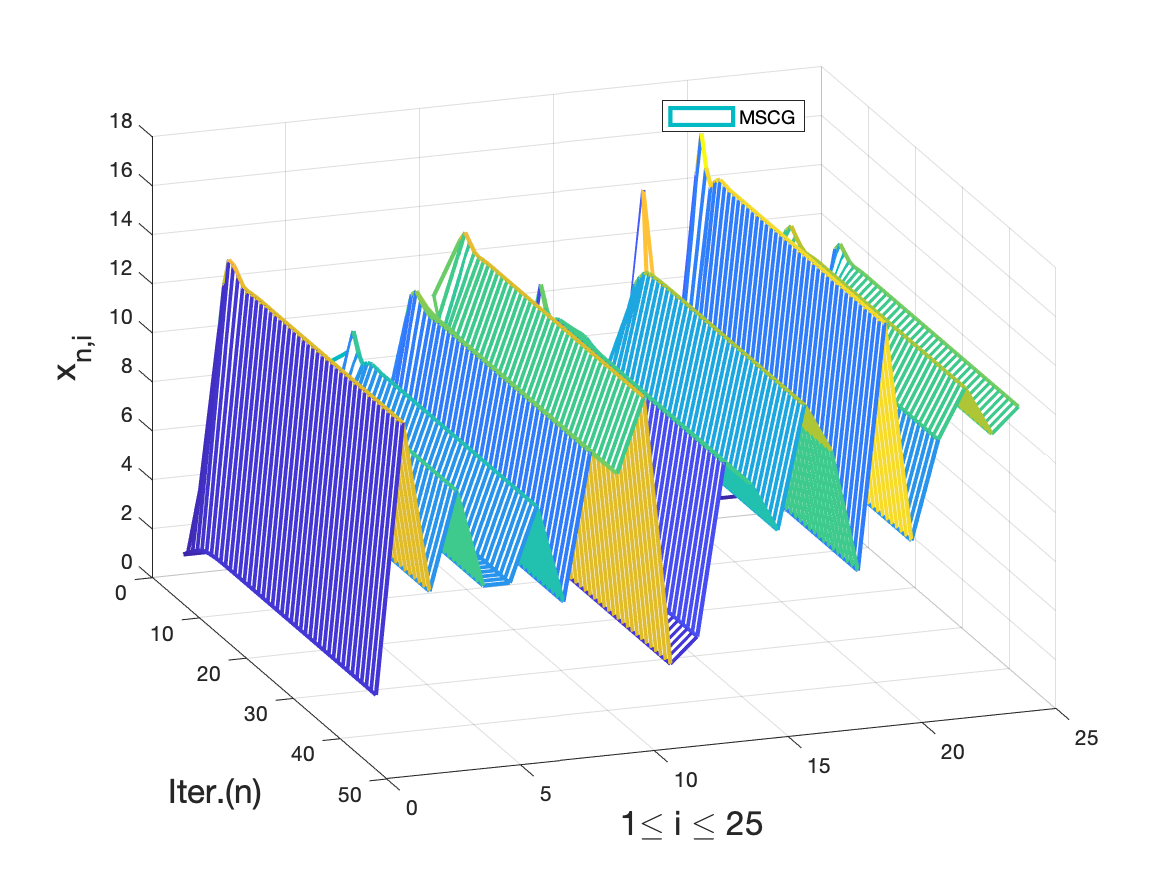}
				}
				\subfigure[ScCG Method]{
					\includegraphics[width=60mm]{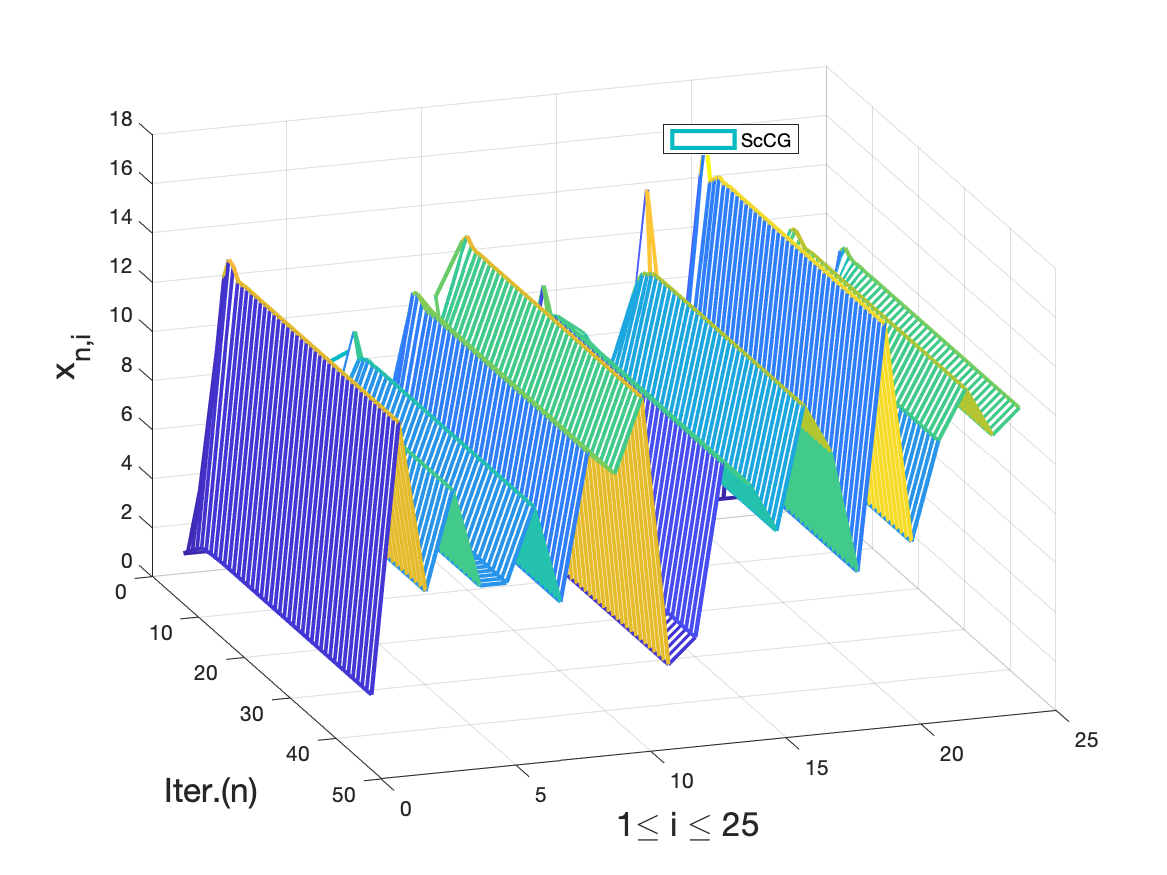}
				}
				\caption{Graphical progression of  MDDLSCG, MSCG, ScCG upto 50th iterations for the APQ function}
				\label{APQprogress}
			\end{figure}

			\section{Application to compressed sensing}
			
%
			Compressed sensing is a widely recognized signal processing approach that enables the efficient acquisition and reconstruction of signals with a sparse structure, allowing these signals to be stored in a compact and memoryless form \cite{Bruckstein2009}. Its practical significance has been demonstrated in numerous fields, including machine learning, compressive imaging, radar systems, wireless sensor networks, medical and astronomical signal processing, and video compression, as discussed in \cite{Aminifard2022}.
			
			The compressed sensing problem involves finding a sparse solution to a highly underdetermined system, represented as \(Ax = b\), where \(A \in \mathbb{R}^{m \times n}\) (with \(m \ll n\)) and \(b \in \mathbb{R}^m\) \cite{Bruckstein2009}. A common approach is to solve the following optimization problem:
			\[
			f(x) = \frac{1}{2} \|Ax - b\|^2 + \mu \phi(x),
			\]
			where \(\phi: \mathbb{R}^n \to \mathbb{R}\) is a penalty function, and \(\mu > 0\) is a parameter that regulates the trade-off between sparsity and reconstruction fidelity. A popular choice for \(\phi\) is \(\phi(x) = \|x\|_1\), leading to the well-known Basis Pursuit Denoising (BPD) problem, which has been extensively studied in the literature; see \cite{Esmaeili2019} and references therein. However, solving BPD is difficult because the \(\ell_1\)-penalty term is non-smooth. To overcome this difficulty, Zhu et al. \cite{Zhu2013} introduced a relaxed form of BPD based on Nesterov's smoothing technique \cite{Nesterov2005Siam}, leading to the following formulation:
			\[
			f_\lambda(x) = \sum_{i=1}^n \psi_\lambda (|x_i|),
			\]
			where \(x_i\) represents the \(i\)-th component of the vector \(x\), and \(\psi_\lambda\) is defined by
			\[
			\psi_\lambda(|x_i|) = \max_{\gamma \in [0,1]}\left\{\gamma|x_i| - \frac{1}{2}\lambda \gamma^2\right\},
			\]
			which simplifies to:
			\[
			\psi_\lambda(|x_i|) = \begin{cases}
				\frac{|x_i|^2}{2\lambda}, & \text{if } |x_i| < \lambda,\\
				|x_i| - \frac{\lambda}{2}, & \text{if } |x_i| \geq \lambda,
			\end{cases}
			\]
			where $\lambda>0$. Function \(f_\lambda\) is known as the Huber function \cite{Rangarajan}, and its gradient \(\nabla f_\lambda\) is  \(\frac{1}{\lambda}\) -Lipschitz continuous \cite{Becker2011}.  The explicit form of the Huber function is given by
			\[
			(\nabla f_\lambda)_i = \begin{cases}
				\frac{x_i}{\lambda}, & \text{if } |x_i| < \lambda,\\
				\text{sgn}(x_i), & \text{otherwise}.
			\end{cases}
			\]
			where  \(\text{sgn}(\cdot)\) denotes the signum function.
			
			Therefore, instead of solving the original problem for \( f(x) = \frac{1}{2} \|Ax - b\|^2 + \mu \|x\|_1 \), we solve the smoothed version:
			
			\[
			\min_{x \in \mathbb{R}^n} f(x) = \frac{1}{2} \|Ax - b\|^2 + \mu f_\lambda(x).
			\]
			
			Clearly, the problem above is an unconstrained smooth convex optimization problem, and its gradient is given by:
			
			\[
			\nabla f(x) = \lambda \nabla f_\lambda(x) + A^T(Ax - b).
			\]

			For the numerical experiments, we aim to reconstruct a sparse signal of length \( n \) from \( m \)-length observations (\( m \ll n \)). The parameters for the   MDDLSCG, MSCG and ScCG algorithms are set as follows: \\
			\(\mu = \max\{2^{-7}, 0.001\|A^T b\|_\infty\}\), \(\lambda = \min\{0.001, 0.048\|A^T b\|_\infty\}\), \(\sigma = 0.1\), 
			\(\delta = 0.01\), \(\eta=0.001\), \(\tau=10\), \(r=1\) and \(\nu=0.001\).

			For MDDLSCG, we choose $p=0.4$ and $q=0.2$. The Gaussian matrix \( A \) is generated with the MATLAB commands \texttt{randn(m, n)}  and the noise vector \( w \)  is generated with \( 0.1 \times \texttt{randn(m, 1)} \). The measurement vector \( b \) is computed by
			\(
			b = Ax + w.
			\)
			The regularization parameter is chosen as \( \mu = 0.001 \|A^T b\|_\infty \). Each methods begin with the initial point \( x_0 = A^T b \). Let \( x \) represent the true signal and \( x^* \) denote the recovered one. The termination criterion for the experiment is the mean-squared error(MSE):
			\[\text{MSE}=\frac{1}{n}\sum_{i=1}^n (x_i-x_i^*)^2  \leq 10^{-5}.\]
			To assess the reconstruction quality, we also compute the relative error (RelErr) of the restored signal, defined by
			\[	\text{RelErr} = \frac{\|x^* - x\|}{\|x\|}.\]
			
			Let \( k \) represent the number of nonzero components in the original signal. To ensure a fair comparison, we test two cases: $(m, n, k) = (128, 512, 16)$, $(256, 1024, 32)$.

			The numerical results are summarized in Table \ref{TableIner1}, where we list, for each case, the average iteration count (Itr), the average CPU time (Tcpu), the mean squared error (MSE), and the relative error (RelErr). The graph depicting the relationship between relative error and the number of iterations is shown in Figures \ref{stpp512}(a) and \ref{stopp}(a), while the graph illustrating MSE versus the number of iterations is presented in Figures \ref{stpp512}(b) and \ref{stopp}(b). In addition, the recovery of the signal is visualized in Figures \ref{signal512} and \ref{signal}.
			
			From Table \ref{TableIner1}, Figures \ref{stpp512} and \ref{stopp}, it is clear that the MDDLSCG algorithm surpasses the MSCG and ScCG algorithms in performance across all cases.

			\begin{figure}[htbp]
				\centering
				\subfigure[relative error]{
					\includegraphics[width=65mm]{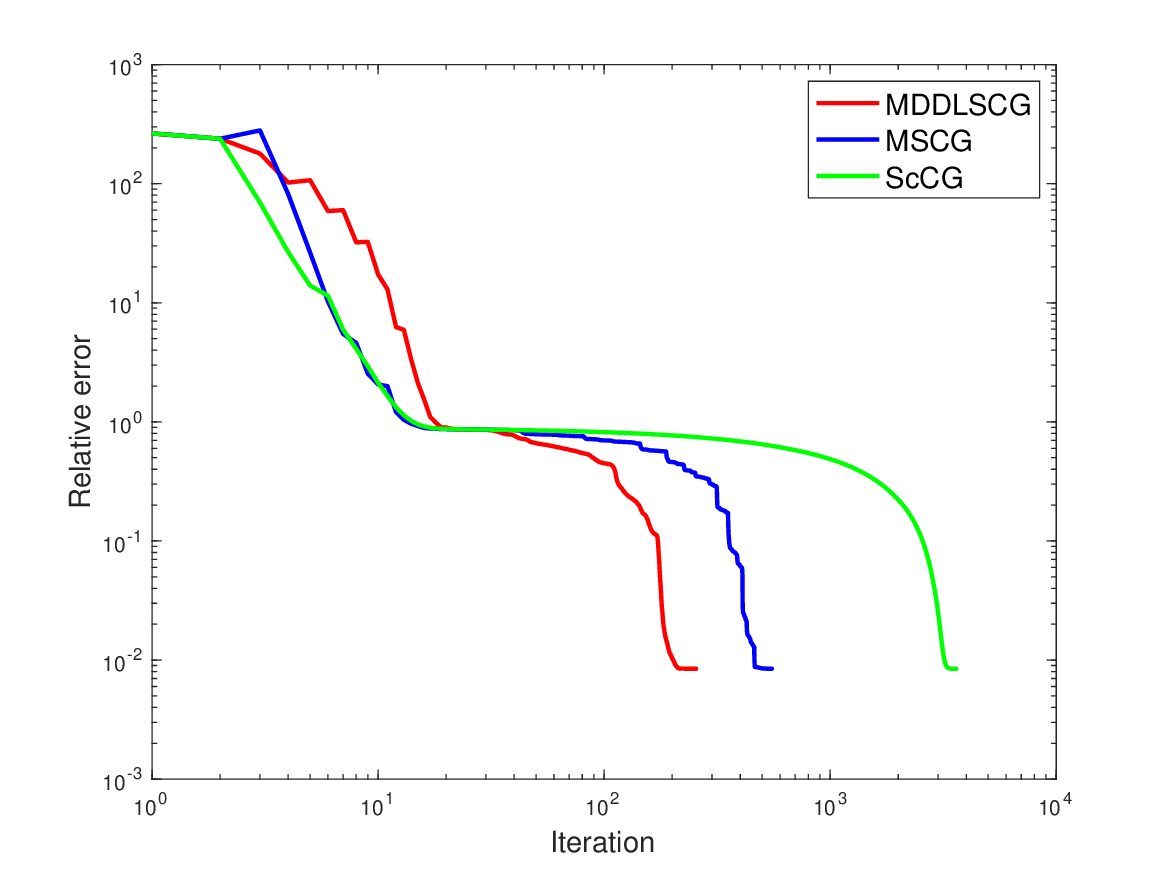}
				}%
				\subfigure[stopping criterion]{
					\includegraphics[width=65mm]{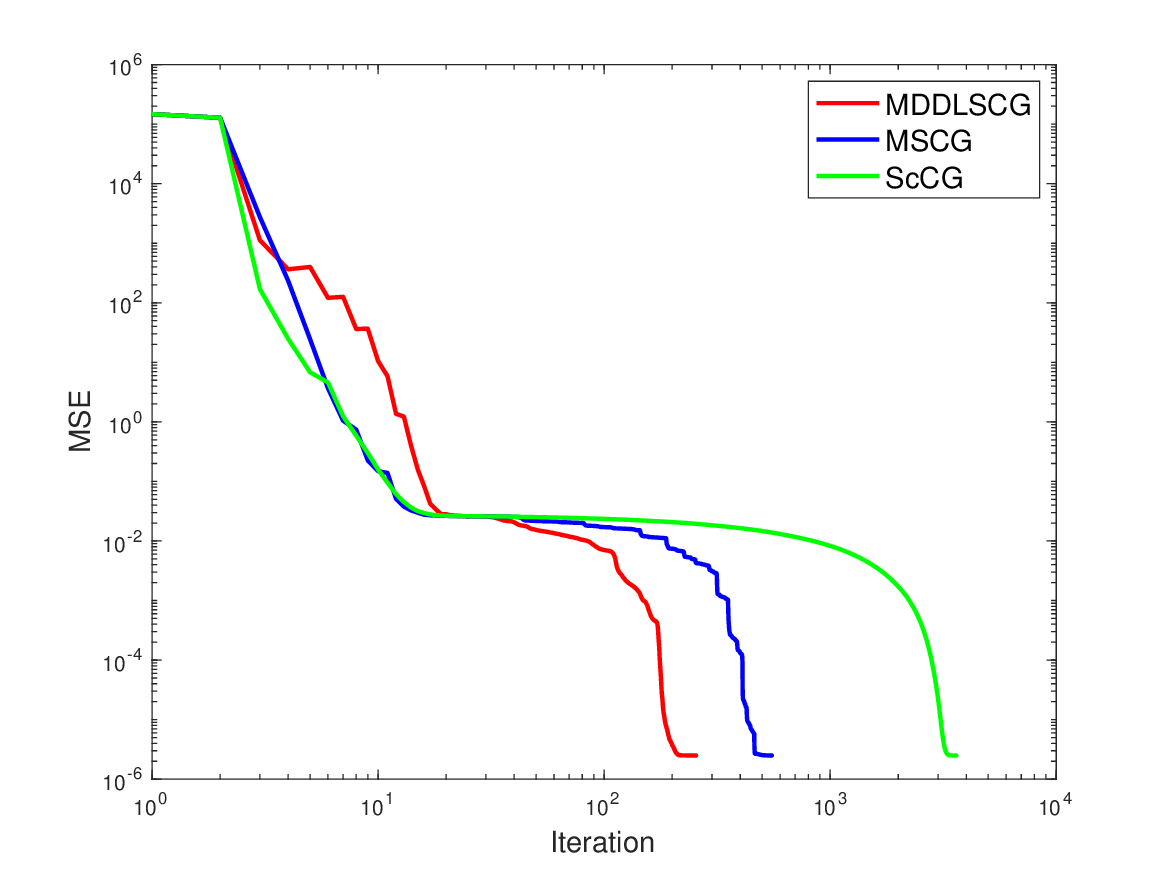}
				}
				\caption{Convergence behavior and relative error plots, demonstrating the performance of the methods MDDLSCG, MSCG and ScCG for $(m, n, k) = (128, 512, 16)$}
				\label{stpp512}
			\end{figure}

			\begin{figure}[htbp]
				\centering
				\includegraphics[width=1\linewidth]{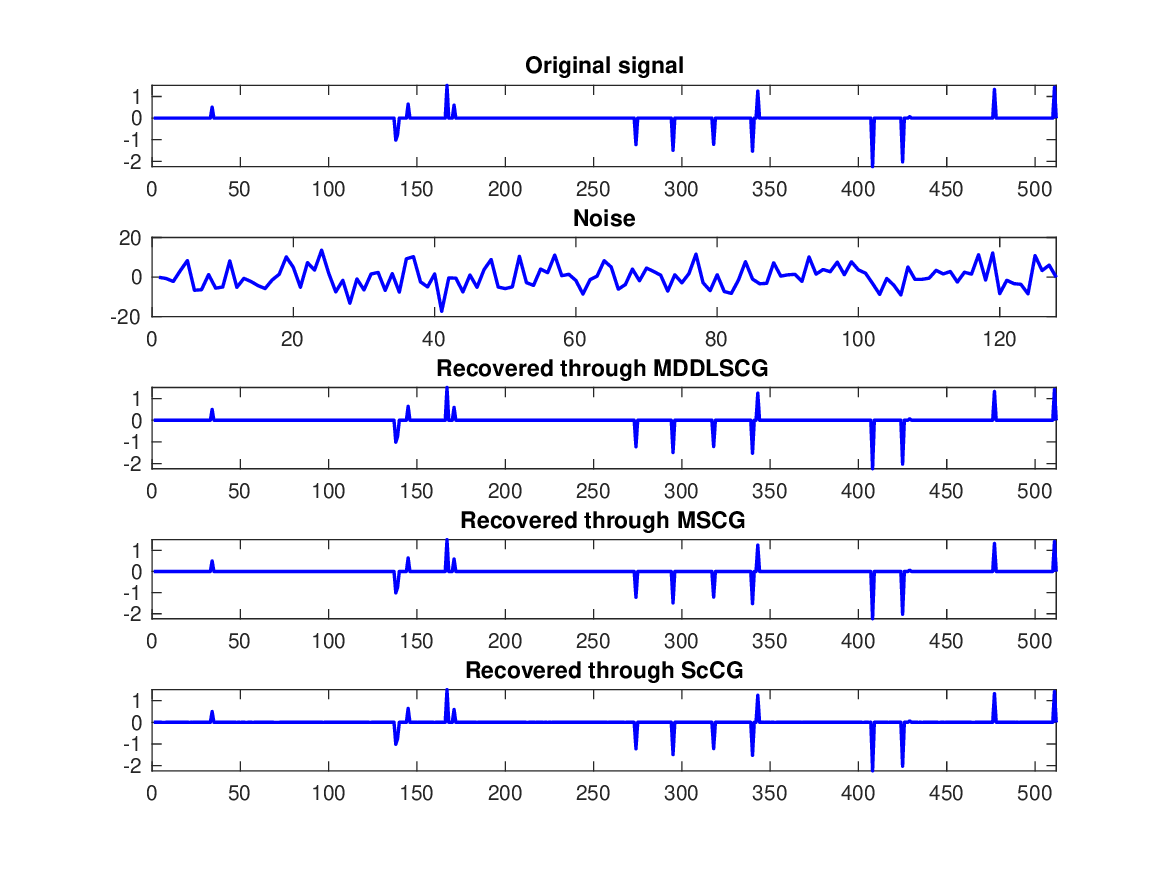}
				\caption{Signal recovery for data $(m, n, k) = (128, 512, 16)$}
				\label{signal512}
			\end{figure}

			\begin{figure}[htbp]
				\centering
				\subfigure[Relative error]{
					\includegraphics[width=65mm]{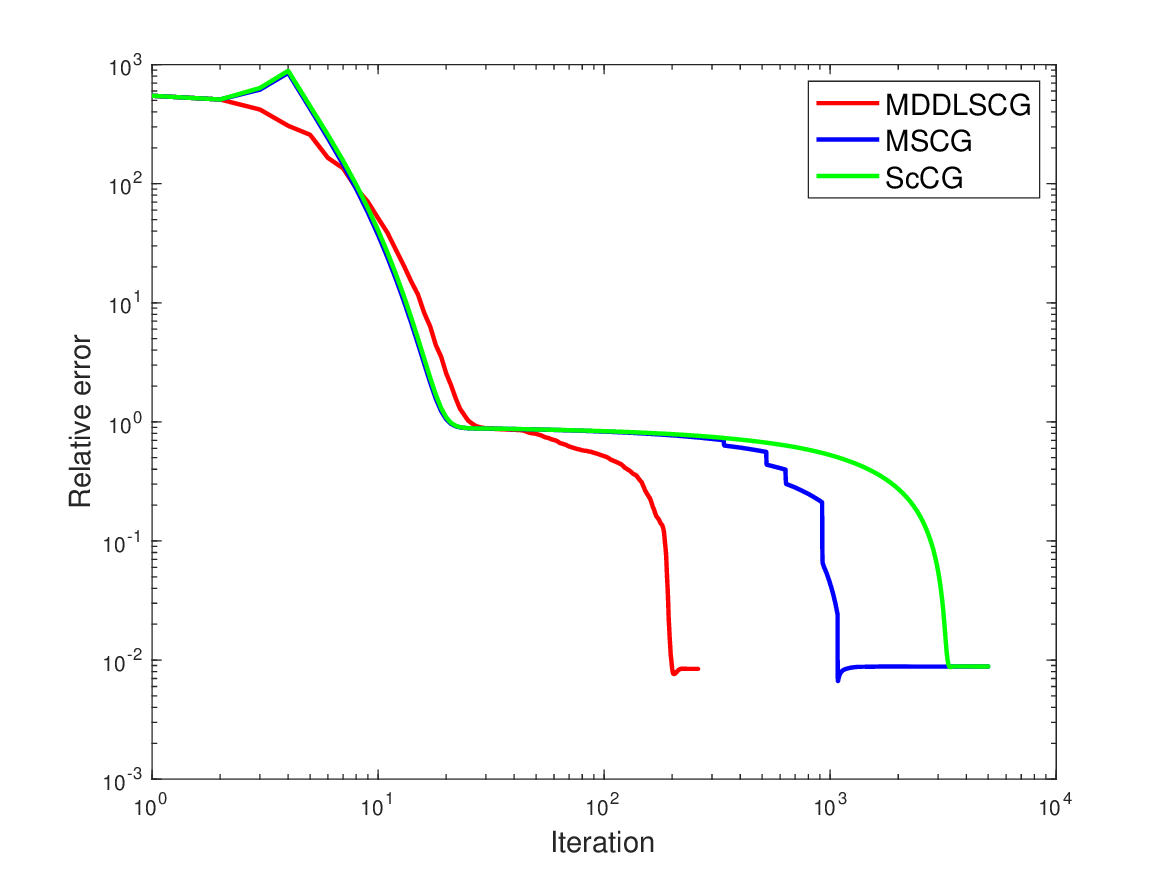}
				}%
				\subfigure[Stopping Criterion]{
					\includegraphics[width=65mm]{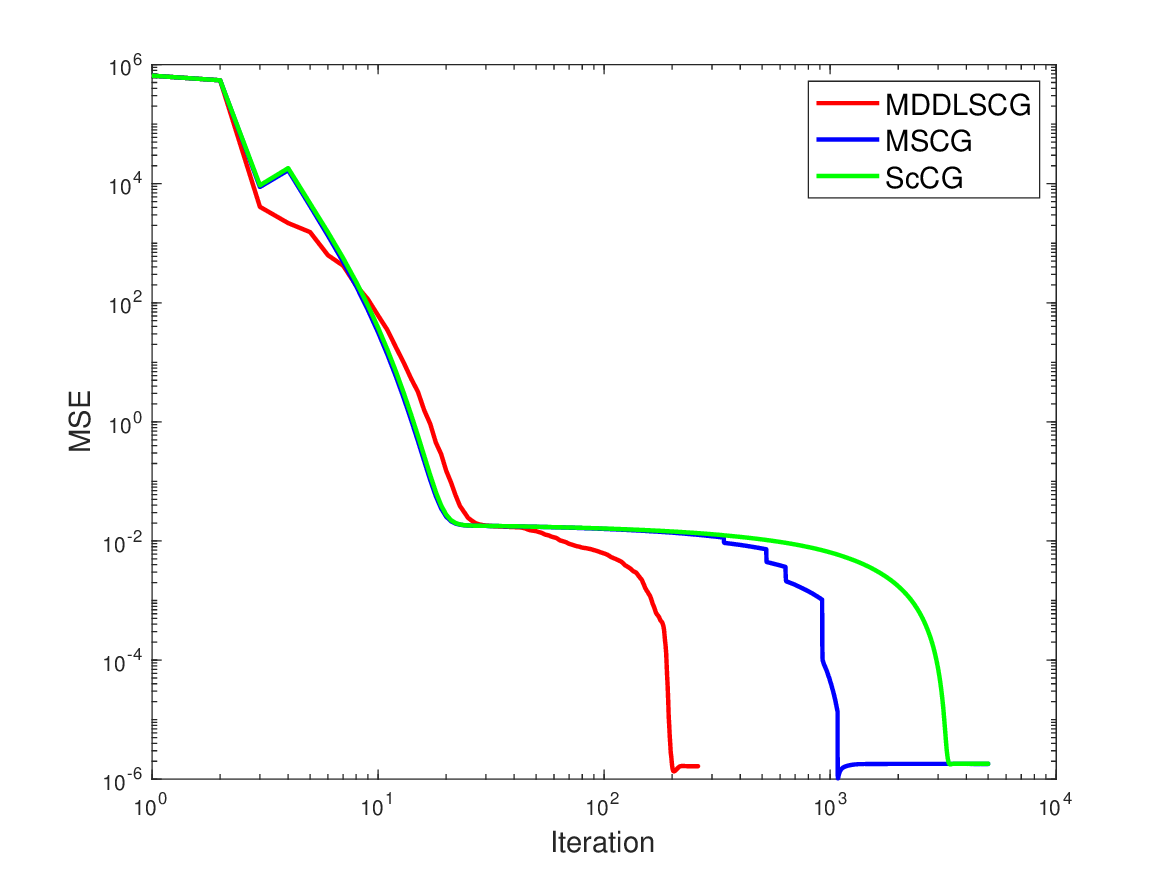}
				}
				\caption{Convergence behavior and relative error plots, demonstrating the performance of the methods MDDLSCG, MSCG and ScCG for $(m, n, k) = (256, 1024, 32)$}
				\label{stopp}
			\end{figure}

			\begin{figure}[htbp]
				\centering
				\includegraphics[width=1\linewidth]{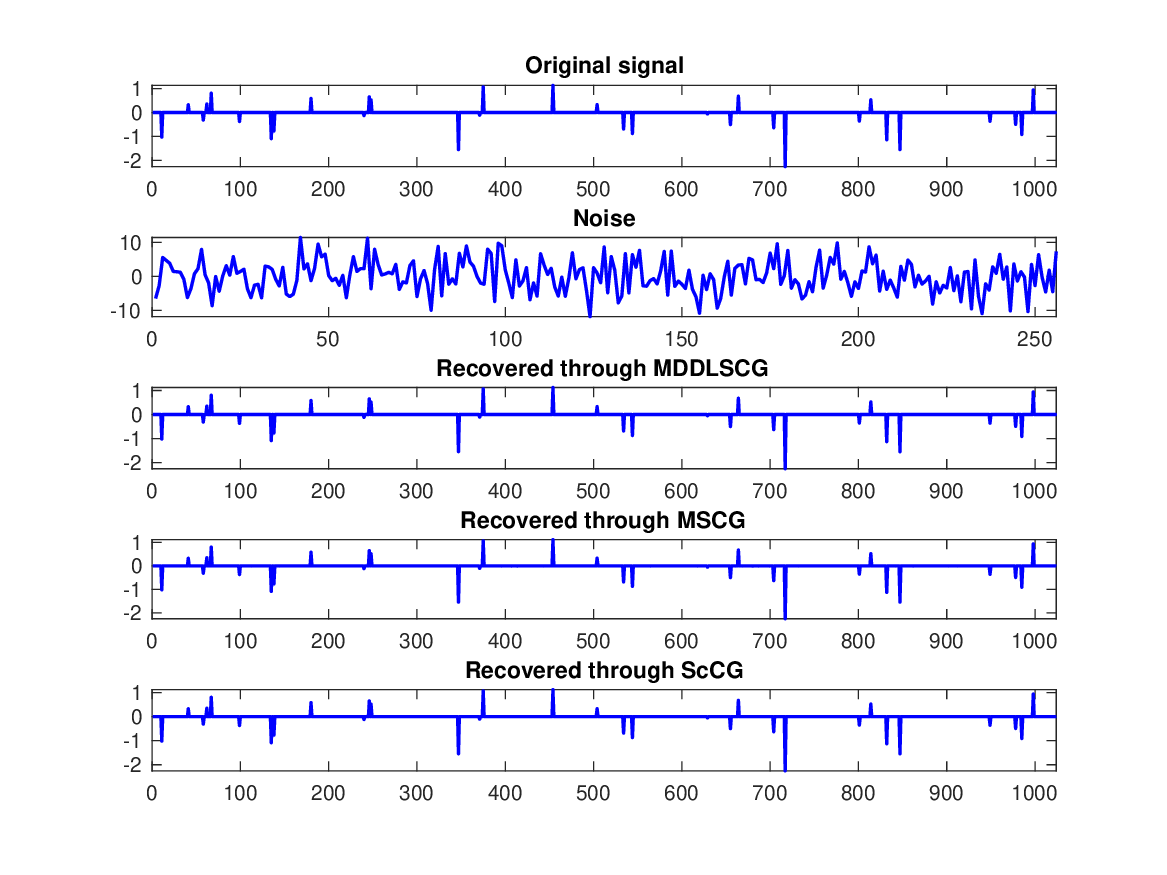}
				\caption{Signal recovery for data $(m, n, k) = (256, 1024, 32)$}
				\label{signal}
			\end{figure}
			
			\begin{table}[htbp]
				\centering
				\begin{adjustbox}{width=1\textwidth}
					\begin{tabular}{|cccccc|}
						\hline
						Dim$(m,n,k)$ & Method & Itr& Tcpu& MSE &RelErr\\
						\hline
						(128,512,16)&MDDLSCG&272 &1.186382e-01 &2.033486e-06 &7.487872e-03\\
						&MSCG &493& 1.606987e-01&2.033495e-06 &7.488087e-03\\
						& ScCG&3653& 1.135658e+00&2.033588e-06 &7.712389e-03\\
						\hline
						(256,1024,32)&MDDLSCG&291& 2.163503e-01&2.128598e-06 &9.721181e-03\\
						& MSCG&1405& 8.339790e-01 & 2.128516e-06 &9.720993e-03\\
						& ScCG&3002& 1.746555e+00 & 2.229608e-04 &9.949161e-02\\
						\hline
					\end{tabular}
				\end{adjustbox}
				\caption{Comparison table for methods MDDLSCG, MSCG and ScCG}
				\label{TableIner1}
			\end{table}
			
			\section{Conclusions}

			We investigated a modified descent Dai-Liao spectral conjugate gradient (MDDLSCG) method that incorporates a new secant condition and quasi-Newton direction. The updated method introduces spectral parameters ensuring a sufficient descent property independent of line search techniques. Theoretical analysis establishes the global convergence of our method for general nonlinear functions under standard assumptions. Numerical experiments and an application to signal processing demonstrate the improved performance and practical effectiveness of the proposed algorithm. Additionally, we present numerical results that validate its performance and show its superior convergence compared to the MSCG method \cite{Faramarzi2019} and the ScCG method \cite{Mrad2024}, highlighting its potential in optimization and related applications.

			\section*{Declaration of competing interest}
			The authors declare that they have no known competing financial interests or personal relationships that could have appeared to influence the work reported in this paper.
			
			\section*{Funding}
			The authors declare that no funds, grants, or other support were received 
			during the preparation of this manuscript.
			
			\bibliographystyle{elsarticle-num} 
			\bibliography{MDDL}
			
			
			
			%
			%
			%
		\end{document}